\def\R{\mathbb{R}}
\newcommand{\expn}{\operatorname{e}}
\newcommand{\opt}{\operatorname{opt}}
\newcommand{\diag}{\operatorname{diag}}
\newcommand{\kernel}{\operatorname{ker}}
\newcommand{\orth}{\operatorname{orth}}
\newcommand{\vect}{\operatorname{vec}}
\newcommand{\im}{\operatorname{im}}
\DeclareMathOperator*{\argmin}{arg\,min}
\DeclareMathOperator*{\markov}{markov}
\newcommand{\beq}{\begin{equation}}
\newcommand{\eeq}{\end{equation}}
\newcommand {\mat}      [1] {\left[\begin{array}{#1}}
\newcommand {\rix}          {\end{array}\right]}
\newcommand {\smat}      [1] {\left[\begin{smallmatrix}{#1}}
\newcommand {\srix}          {\end{smallmatrix}\right]}
\newcommand {\s}      [1] {\begin{smallmatrix}{#1}}
\newcommand {\se}          {\end{smallmatrix}}
\newcommand{\trace}{\operatorname{tr}}
\newtheorem{defn}{Definition}[section]
\newtheorem{remark}{Remark}
\newtheorem{lem}[defn]{Lemma}
\newtheorem{thm}[defn]{Theorem}
\newcommand{%
	\tikzsetnextfilename{figure/}%
	\input{figure/.tikz}%
}[1]{%
	\tikzsetnextfilename{figure/#1}%
	\input{figure/#1.tikz}%
}
\def\addlegendimage{\csname pgfplots@addlegendimage\endcsname}
\newlength\fheight
\newlength\fwidth
\title{Solving high-dimensional optimal stopping problems using optimization based model order reduction}
\author{Martin Redmann\thanks{Martin Luther University Halle-Wittenberg, Institute of Mathematics, Theodor-Lieser-Str. 5, 06120 Halle (Saale), Germany, Email: {\tt 
martin.redmann@mathematik.uni-halle.de}.}
}
\begin{document}

\maketitle

\begin{abstract}
Solving optimal stopping problems by backward induction in high dimensions is often very complex since the computation of conditional expectations is required. Typically, such computations are based on regression, a method that suffers from the curse of dimensionality. Therefore, the objective of this paper is to establish dimension reduction schemes for large-scale asset price models and to solve related optimal stopping problems (e.g. Bermudan option pricing) in the reduced setting, where regression is feasible. The proposed algorithm is based on an error measure between linear stochastic differential equations. We establish optimality conditions for this error measure with respect to the reduce system coefficients and propose a particular method that satisfies these conditions up to a small deviation. We illustrate the benefit of our approach in several numerical experiments, in which Bermudan option prices are determined.
\end{abstract}
\textbf{Keywords:} asset price model $\cdot$ option pricing in high dimensions $\cdot$ optimization based model order reduction $\cdot$ L\'evy processes

\noindent\textbf{MSC classification:} 60G40 $\cdot$ 60G51 $\cdot$ 65C30 $\cdot$  91G60


\section{Introduction}

\subsection{Setting and reduced order approximation}
We consider the following linear stochastic system \begin{subequations}\label{original_system}
\begin{align}\label{stochstate}
 dx(t) &= Ax(t)dt + \sum_{i=1}^{q} N_i x(t-) dM_i(t),\quad x(0) = x_0 = X_0 z_0,\\ \label{stochout}
y(t) &= Cx(t),\quad t\in [0, T],
\end{align}
            \end{subequations}
where $x(t-):=\lim_{s\uparrow t} x(s)$. Above, we assume that $A, N_i \in \R^{n\times n}$ and $C \in \R^{p\times n}$. The columns of $X_0 \in \R^{n\times m}$ span the initial states of interest, where $z_0\in \R^m$ is a generic vector of coefficients associated to the expansion of a particular $x_0\in \im[X_0]$. Here, $\im[\cdot]$ denotes the image of a matrix. Let 
$M=\left(M_1, \ldots, M_{q}\right)^\top$ be an $\mathbb R^{q}$-valued square integrable L\'evy process with mean zero and covariance matrix $K_M=(k_{ij})_{i, j = 1, \ldots, q}$, i.e.,
$\mathbb E[M(t)M(t)^\top]= K_M t$ for $t\in [0, T]$. Such a matrix exists, see, e.g., \cite{zabczyk}.
$M$ and all stochastic process appearing in this paper are defined on a filtered probability space
$\left(\Omega, \mathcal F, (\mathcal F_t)_{t\in [0, T]}, \mathbb P\right)$\footnote{$(\mathcal F_t)_{t\in [0, T]}$ is right continuous and complete.}. In addition, $M$ is $(\mathcal F_t)_{t\in [0, T]}$-adapted and its 
increments $M(t+h)-M(t)$ are independent of $\mathcal F_t$ for $t, h\geq 0$ and $t+h\leq T$. 

System \eqref{original_system} can represent an $n$-dimensional asset price model, e.g., having the form  $
 A= \mathbf{\mathrm{r}} I$ and $N_i = \xi_i  e_i e_i^\top$,                                                                                                                                                                                                                                                                                                                                                                                
where $\mathbf{\mathrm{r}}\in\mathbb R$ is an interest rate, $\xi_i>0$ is a volatility parameter, $e_i$ is the $i$th unit vector in $\mathbb R^n$ and $q=n$. The quantity of interest then typically is a basket, i.e., $C=\frac{1}{n}\begin{bmatrix} 1 & 1 & \ldots & 1 \end{bmatrix}$ or a full state approximation is required as for instance in the context of max call options, where we set $C=I$. 
In this paper, the main focus will be on the Black-Scholes setting \eqref{original_system}. However, we will deliver strategies to extend this work to Heston-type models in Section \ref{sec_heston}. There, the constant covariance matrix $K_M$ will be replaced by a stochastic process. Now, the goal is to approximate \eqref{original_system} by a system 
\begin{subequations}\label{red_system}
\begin{align}\label{red_stochstate}
 d\hat x(t) &= \hat A\hat x(t) dt + \sum_{i=1}^{q} {\hat N}_i \hat x(t-) dM_i(t),\quad \hat x(0) = {\hat x}_0 = \hat X_0 z_0,\\ \label{red_stochout}
\hat y(t) &= \hat C\hat x(t),\quad t\in [0, T],
\end{align}
\end{subequations}
with a potentially  much smaller state dimension $\hat n\ll n$ and $y\approx \hat y$ in some sense. We have that $\hat x(t)\in\mathbb R^{\hat n}$  and $\hat A$, ${\hat N}_i \in \R^{\hat n\times \hat n}$ ($i=1,\ldots, q$), ${\hat X_0}\in \R^{\hat n\times m}$ and $\hat C\in \R^{p\times \hat n}$. 
Let us briefly sketch the idea how to derive \eqref{red_system} and the particular structure of the reduced system. The goal is to find an $\hat n$-dimensional subspace $\im[V]\subset \R^n$ that approximates the manifold of the state variable $x$, where $V\in\mathbb R^{n\times \hat n}$ is a full-rank matrix. Then, there exist a process $\hat x$ such that $V \hat x(t) \approx x(t)$. Given that this approximation is accurate, $\im[V]$ is called dominant subspace of \eqref{stochstate} or of $x$. Inserting the above estimate
into \eqref{original_system}, we have
\begin{align}\label{residual_eq}
 V\hat x(t) = X_0 z + \int_0^t AV \hat x(s) ds + \sum_{i=1}^{q} \int_0^t {N}_i V\hat x(s-) dM_i(s) + e(t)
\end{align}
with $y(t)\approx \hat y(t):= C V\hat x(t)$ and where $e(t)$ is the error in the state equation.
We assume/enforce the residual $e(t)$  to be orthogonal to a carefully chose second subspace $\im[W]$, where $W\in\mathbb R^{n\times \hat n}$ has full rank. Here, $\im[W]$ can, e.g., be the dominant subspace of the dual state equation ($A, N_i, X_0$ are replaced by $A^\top, N_i^\top, C^\top$ in \eqref{stochstate}). Now, multiplying \eqref{residual_eq} with $(W^\top V)^{-1} W^\top$ from the left leads to \eqref{red_system} 
with coefficients\begin{align}\label{red_mat_projection}
  \hat A = (W^\top V)^{-1}W^\top A V , \quad \hat N_i = (W^\top V)^{-1}W^\top N_i V , \quad    \hat X_0 = (W^\top V)^{-1}W^\top X_0,\quad \hat C = C V.
     \end{align}
The associated state approximation $V \hat x$ can be viewed as a process resulting from a Petrov-Galerkin method using the (non-orthogonal) projection $V(W^\top V)^{-1}W^\top$ onto $\im[V]$. For that reason, we also call $V$ and $W$ projection matrices. These projection matrices can be derived using different strategies. A very popular methods for deterministic control systems is balanced truncation \cite{moore} which has been studied for stochastic control systems as well \cite{beckerhartredrich, bennerdamm, redmannbenner}. Here, we have $W^\top V = I$. Extending such concepts to asset price models is non trivial as they have different properties than the previously studied equations. A first work exploiting such techniques in the option pricing context can be found in \cite{mor_heston}. Another important class is Proper Orthogonal Decomposition \cite{pod} that in contrast to the previously mentioned scheme relies on an orthogonal projection, i.e., $V = W$ is an orthogonal matrix. This type of methods has been applied to stochastic differential equations \cite{pod_sde} and in the finance context as well \cite{Bermudan_POD}. A third class are Krylov subspaces/optimization based model reduction techniques such as the Iterative Rational Krylov Algorithm (IRKA) that has been widely investigated for deterministic controlled equations \cite{linIRKA} but also exist for stochastic systems \cite{mliopt}. The key idea is to find an upper bound for the error between two systems like \eqref{original_system} and \eqref{red_system}, establish associated optimality conditions w.r.t. the reduced order coefficients and subsequently construct a model that is optimal in terms of the underlying error measure. The goal of this paper is to create such IRKA-type schemes for general uncontrolled linear stochastic differential equations like asset price models that, in contrast to \cite{mliopt}, do not rely on restrictive assumptions such as asymptotic stability or zero initial conditions. Comparing our approach with other dimension reduction techniques like \cite{mor_heston}, the advantage is that it usually is computationally less expensive and applicable in higher dimensions. We refer to  \cite{morbook1, morbook2} for a general and comprehensive overview on model reduction techniques for deterministic (control) systems.
\smallskip

Below, we point out why low-order approximations are meaningful in the context of optimal stopping problems.

\subsection{Motivation, objective and outline of this paper}\label{intro2}

Our goal is to solve (Markovian) optimal stopping problems associated to \eqref{original_system} as they for instance occur when pricing American/Bermudan options. Due to the enormous computational complexity for large $n$, we aim to use the low-dimensional approximation in \eqref{red_system} to find an estimate for the optimal value of the stopping problem: 
 \begin{align}\label{approx_stop}
  \sup_{\tau \in \mathcal{S}_0}\mathbb E \left[f_\tau(y(\tau))\right] \approx \sup_{\tau \in \hat{\mathcal{S}}_0}\mathbb E \left[f_\tau(\hat y(\tau))\right],                                                                                                                                               \end{align}
where $\mathcal S_0$ and $\hat{\mathcal{S}}_0$ are sets of stopping times associated to the filtrations generated by the price processes $x$ and $\hat x$, respectively, and restricted to a set of exercise dates $\mathcal J\subset [0, T]$. Moreover, in the option pricing context,  $f$ is a discounted payoff function. The 
benefit of the approximation in \eqref{approx_stop} is that backward dynamic programming might be applicable in the reduced setting \eqref{red_system}. Dynamic programming is often not feasible in the original framework \eqref{original_system} since it requires to compute continuation functions of the form \begin{align}  \label{continuation_function}                                                                                                                                                              
c_s(x)=\mathbb E[V_{t}(x({t})) | x(s)=x],\quad x\in\mathbb R^n, \quad s< t,  \quad s, t\in\mathcal J,
\end{align}
where $V_t$ is the value function at time $t$ and $c_s(x)$ describes the expected future value of the option given the state of the underlyings at time $s$. A Bermudan option is now exercised at $s$ if the current payoff is the first time larger or equal than the continuation value at $s$. Given a suitable (polynomial) basis $\psi_{1}, \ldots, \psi_{\mathfrak K}$, we can find an approximation $c_s(\cdot) \approx\sum_{k=1}^{\mathfrak K} {\beta}_k^{\opt}\psi_{k}(\cdot)$ from the least squares problem
\begin{equation}\label{eq:least-squares-regression} 
{\beta}^{\opt} := \argmin_{\beta
\in\mathbb{R}^{\mathfrak K}} \sum_{i=1}^{\mathfrak M} \left|  V_t(x(t)^{i}) - \sum_{k=1}^{\mathfrak K} \beta_{k}
\psi_{k}(x(s)^{i}) \right|  ^{2},
\end{equation}
where $x(t)^{i}$ and $x(s)^{i}$ ($i=1, \dots, \mathfrak M$) are i.i.d.~samples of the random variables $x(t)$ and $x(s)$, respectively. We refer to \cite{LS2001, TV2001} for such regression based schemes used for pricing options. Now, solving the least squares problem in \eqref{eq:least-squares-regression} goes along with a large computational burden already in moderate high dimensions since this method suffers from the curse of dimensionality and is hence often not applicable for $n \geq 10$. Therefore, the continuation functions shall be computed in the reduced setting based on \eqref{eq:least-squares-regression}.   However, a sufficiently large reduction potential of the problems is required to be able to choose $\hat n$ small enough while the approximation error is low, e.g., to be able to apply the algorithm of Longstaff and Schwartz \cite{LS2001}.\smallskip

The paper, is now organized as follows. In Section \ref{sec_error_measure}, an error bound between two general systems of the form \eqref{original_system} and \eqref{red_system} is derived. The hope is that finding a candidate for \eqref{red_system} ensuring a small/minimal bound leads to a good approximation. Based on a Gronwall lemma for matrix differential equations, it is further shown that the same bound applies to a certain class of asset price models with stochastic volatilities allowing to directly transfer the results of this paper from the Black-Scholes to this more general setting.
For the error measure of Section \ref{sec_error_measure}, necessary optimality conditions for local minimality are proved in Section \ref{section3} which is one of our main results.  Here, very different techniques in comparison to similar approaches in other settings are required due to the higher complexity of the error measure. Moreover, we construct a particular model reduction scheme designed to approximately fit the optimality conditions associated to the error bound. The relation between the gap in these conditions and the covariance error at time $T$ is pointed out. In addition, a link between the error in the optimality conditions and eigenvalues of system covariances as well as certain singular values of \eqref{original_system} is shown. Such values provide algebraic criteria for a potentially low-dimensional underlying structure. Section \ref{mor_stab_sys} deals with the asymptotic behavior of the proposed model reduction algorithm. In fact, the error in the optimality conditions vanishes as $T\rightarrow \infty$ in case mean square asymptotic stability is assumed for \eqref{original_system} and \eqref{red_system}. The paper is concluded by several numerical experiments in Section \ref{sec_num} showing the benefit of our dimension reduction algorithm. In particular, fair prices of Bermudan basket and max call options are computed in the reduced setting representing highly accurate approximations of the original prices.

\subsection{Literature review on alternative methods}
Apart from our dimension reduction approach, there are several other concepts for pricing options given high-dimensional underlyings. One way is to exploit that value functions associated to options are solutions of partial differential equations (PDEs), e.g., Black-Scholes PDEs for European or Hamilton-Jacobi-Bellman equations for American options. In such PDE settings, the spatial variable is of dimension $n$ which does not allow for traditional computational tools like finite element and finite difference discretizations as they suffer from the curse of dimensionality. Many recent approaches for solving high-dimensional problems are based on machine learning, in particular deep neural
networks, see, e.g., \cite{machinelearning1,machinelearning2}. These methods often offer effective computational approaches which exploit an underlying low effective dimensionality. Explicitly detecting these low-dimensional structures is the goal of the model reduction method that we investigate in this paper.\smallskip

Another way to solve high-dimensional PDEs is the use of sparse grid approximations. Roughly speaking, 
a 1D grid of size $\mathcal N$ is turned into a ``tensor-product'' grid of size $\mathcal N^n$ in dimension $n$. This
explosion can often be avoided by a careful choice of a sparse ``subgrid''. Under suitable regularity conditions,
similar accuracy can be achieved with sparse grids of size asymptotically proportional to $\mathcal N \log(\mathcal N)^{n-1}$. Therefore, sparse girds can be viewed as an optimization procedure on a discrete level, whereas model reduction optimizes before any discretization. We refer to \cite{bungartzgriebel04} for a general exposition of sparse grid methods and to \cite{sparse_grid_habil, sparse_fin} for applications of sparse grids in the option pricing context.\smallskip

There are many works tackling high-dimensional stopping (or option pricing) problems more directly. Once again, neural networks an be exploited to learn the optimal stopping strategy $\tau^*\in \mathcal{S}_0$ in \eqref{approx_stop}, see  \cite{learn_tau1, learn_tau2}, or to approximate the continuation functions in \eqref{continuation_function}, see \cite{learn_continuation1, learn_continuation2}.\smallskip

Alternatively, if $p$ is small, one can think of doing a regression in $y$ instead of conducting it in $x$ to compute the continuation functions in \eqref{continuation_function}. However, this would lead to a non Markovian setting. Therefore, Markovian projections \cite{brunick2013mimicking, gyongy1986mimicking} are of interest in which a Markov process $y_{\markov}$ is found, so that $y_{\markov}(t)$ and $y(t)$ have the same distribution for each $t\in [0, T]$. Consequently, we have a weak but exact approximation of our quantity of interest. This is in contrast to our approach, where a good $L^2$-estimate $\hat y$ of $y$ shall be found. Markovian projections have been successfully applied to price options \cite{bayer2019implied, hambly2016forward}. \smallskip

Last, we want to refer to a very different approach aiming to lower the complexity of regression by tensor methods \cite{bermudan_tensor}. The high complexity of regression comes from the number of required coefficients $\beta_k$ in \eqref{eq:least-squares-regression} that is exponentially increasing in $n$. In \cite{bermudan_tensor}, the key idea is to reduce the degrees of freedom in the regression by using tensorized polynomial expansions.

\section{Covariance functions, error measures and dual systems}\label{sec_error_measure}

\subsection{Black-Scholes models}\label{blabla}

\paragraph{Covariance functions and error measures.}
We introduce the $\R^{n\times n}$-valued  fundamental solution $\Phi$ associated to \eqref{stochstate} as the solution to
\begin{align}\label{funddef}
 \Phi(t)=I+\int_0^t A \Phi(s) ds+\sum_{i=1}^{q} \int_0^t N_i \Phi(s-)dM_i(s), \quad t\in [0, T].
\end{align}
It immediately follows that we obtain a representation of the state by $x(t) =  \Phi(t) x_0 =  \Phi(t) X_0 z_0$. In addition, the fundamental solution to \eqref{red_stochstate} shall be denoted by $\hat \Phi$. The covariance function $F(t):= \mathbb E \left[\Phi(t) X_0 X_0^\top \Phi(t)^\top\right]$ will play an essential role when analyzing the error between \eqref{original_system} and \eqref{red_system}. This function $F$ can be calculated via a matrix differential equation. In this context, let us introduce the Lyapunov operator $\mathcal L$ by \begin{align}\label{def_lyap_op} \mathcal L(X) = A X+ X {A}^\top+ \sum_{i, j = 1}^{q} N_i X {N}_j^\top\;k_{ij}\end{align}
and its adjoint with respect to the Frobenius inner product $\langle \cdot, \cdot \rangle_F$ which is $\mathcal L^*(X) = A^\top X+ X {A}+\sum_{i, j = 1}^{q} N_i^\top X {N}_j\;k_{ij}$. Throughout this paper, $\mathcal L$ is assumed to be invertible.  
\begin{lem}\label{lemdgl}
Let $\Phi$ be the fundamental solution of \eqref{stochstate}. Then,  
the $\mathbb R^{n\times n}$-valued function $\mathbb E \left[\Phi(t) X_0 X_0^\top \Phi(t)^\top\right]$, $t\in [0, T]$, satisfies \begin{align}\label{matrixdgl}
\dot{F}(t)=\mathcal L[F(t)],\quad F(0)= X_0 X_0^\top.
\end{align}
Using the vectorization $\vect[\cdot]$ of a matrix, \eqref{matrixdgl} is equivalent to 
\begin{align}\label{vect_matrixdgl}
\frac{d}{dt}{\vect[F(t)]}=\mathcal K \vect[F(t)],\quad \vect[F(0)]= \vect[X_0 X_0^\top],
\end{align}
where \begin{align*}                                                                                                \mathcal K:= I\otimes A + A\otimes I+ \sum_{i, j=1}^q N_i \otimes N_j k_{ij}.                                                                                                                        \end{align*}
\end{lem}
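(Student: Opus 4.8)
The plan is to apply the Itô product rule for semimartingales to the matrix-valued process $G(t):=\Phi(t)P\Phi(t)^\top$, where $P:=X_0 X_0^\top$, and then pass to expectations. Writing \eqref{funddef} in differential form as $d\Phi(t)=A\Phi(t)\,dt+\sum_{i=1}^q N_i\Phi(t-)\,dM_i(t)$, I would view $G=UV$ with the factors $U:=\Phi P$ and $V:=\Phi^\top$, and use
\begin{align*}
d(UV)=U_-\,dV+(dU)\,V_-+d[U,V],
\end{align*}
where $\Phi_-$ denotes the left limit and $[\cdot,\cdot]$ the quadratic covariation. Since $P$ is constant, $dU=(d\Phi)P$ and $dV=(d\Phi)^\top$. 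Only the martingale (jump) parts of the factors enter the covariation, which yields
\begin{align*}
d[U,V]=\sum_{i,j=1}^q N_i\Phi_- P\Phi_-^\top N_j^\top\,d[M_i,M_j].
\end{align*}

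Substituting the expressions for $dU$ and $dV$ and collecting, the increment $dG$ decomposes into a finite-variation part with integrand $A\Phi P\Phi_-^\top+\Phi_- P\Phi^\top A^\top$, the covariation term above, and two stochastic integrals against the $dM_i$ that are local martingales. Taking expectations is the decisive step: under the square-integrability assumption on $M$ the martingale integrals vanish in expectation, while the mean-zero increments with $\mathbb E[M(t)M(t)^\top]=K_M t$ force the predictable compensator of $[M_i,M_j]$ to be deterministic, $\mathbb E\,d[M_i,M_j]=k_{ij}\,dt$. Because jumps occur on a time set of Lebesgue measure zero, $\mathbb E[\Phi_- P\Phi_-^\top]=\mathbb E[\Phi P\Phi^\top]=F$ under the time integral, so I obtain
\begin{align*}
\dot F(t)=A F(t)+F(t)A^\top+\sum_{i,j=1}^q N_i F(t) N_j^\top k_{ij}=\mathcal L[F(t)],
\end{align*}
with $F(0)=\Phi(0)P\Phi(0)^\top=X_0 X_0^\top$ as $\Phi(0)=I$, which is \eqref{matrixdgl}.

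For the vectorized form \eqref{vect_matrixdgl} I would apply the identity $\vect[AXB]=(B^\top\otimes A)\vect[X]$ term by term: the two drift terms give $(I\otimes A)+(A\otimes I)$ acting on $\vect[F]$, and $\vect[N_i F N_j^\top]=(N_j\otimes N_i)\vect[F]$. Summing against $k_{ij}$ and exploiting the symmetry $k_{ij}=k_{ji}$ of the covariance matrix $K_M$, I can relabel indices so that $\sum_{i,j}(N_j\otimes N_i)k_{ij}=\sum_{i,j}(N_i\otimes N_j)k_{ij}$, which reproduces the stated operator $\mathcal K$.

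The main obstacle I anticipate is the careful semimartingale bookkeeping for a general, possibly discontinuous, Lévy driver: one must cleanly separate the predictable quadratic variation $\langle M_i,M_j\rangle_t=k_{ij}t$ from the pure-jump martingale remainder of $[M_i,M_j]$, and justify both the vanishing of the martingale integrals in expectation and the interchange of expectation with time integration. The covariance identity $\mathbb E[M(t)M(t)^\top]=K_M t$ built into the setup is exactly what makes the compensator deterministic and the argument go through; in the purely continuous (Brownian) case this collapses to the familiar Itô-formula computation.
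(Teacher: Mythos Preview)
Your proposal is correct and follows the standard route via the It\^o product rule for semimartingales, which is exactly the argument behind the result the paper invokes: the paper's own proof simply cites \cite[Lemma~2.1]{mliopt} for \eqref{matrixdgl} and then performs the same $\vect(A_1 X A_2)=(A_2^\top\otimes A_1)\vect(X)$ step you describe for \eqref{vect_matrixdgl}. In effect you have written out the details that the paper delegates to the reference; your treatment of the quadratic covariation, the compensator $\langle M_i,M_j\rangle_t=k_{ij}t$, and the symmetry relabelling for $\mathcal K$ are all the right moves.
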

\begin{proof}
A more general version of the result in \eqref{matrixdgl} can be found in \cite[Lemma 2.1]{mliopt}. Applying the linear and invertible $\vect$-operator to both sides of \eqref{matrixdgl} and exploiting that $\vect(A_1 X A_2)=  (A_2^\top \otimes A_1) \vect(X)$ for matrices $A_1, A_2, X$ of suitable dimension, \eqref{vect_matrixdgl} immediately follows.
\end{proof}
Lemma \ref{lemdgl} is used below to prove an $L^2$-error bound between \eqref{original_system} and \eqref{red_system} that is the basis for the later algorithm leading to a suitable candidate for \eqref{red_system}. In order to derive this bound, we can combine the state equations in \eqref{stochstate} and \eqref{red_stochstate} to obtain a system
{\allowdisplaybreaks \begin{align}\label{errorsys:original}
d \smat x(t) \\ \hat x(t) \srix &= \smat{A}& 0\\ 
0 &{\hat A}\srix \smat x(t) \\ \hat x(t) \srix dt + \sum_{i=1}^q \smat {N}_i& 0 \\ 
0 &\hat N_i\srix \smat x(t-) \\ \hat x(t-) \srix dM_i(t),\quad \smat x(0) \\ \hat x(0) \srix = \smat X_0 \\  \hat X_0\srix z_0,
\end{align}}
with fundamental solution $\smat \Phi(t)& 0 \\ 0& \hat \Phi(t) \srix$, $t\in [0, T]$. The error between the output $y$ and $\hat y$ given by \eqref{stochout} and \eqref{red_stochout} can be expressed using the state variable of \eqref{errorsys:original}, since
\begin{align}\nonumber
&\mathbb E\int_0^T \left\|y(t)-\hat y(t) \right\|_2^2dt = \mathbb E\int_0^T \left\| \smat C & -\hat C \srix \smat x(t) \\ \hat x(t) \srix \right\|_2^2dt \\ \nonumber
&= \mathbb E\int_0^T \left\| \smat C & -\hat C \srix \smat \Phi(t)& 0 \\ 0& \hat \Phi(t) \srix  \smat X_0 \\  \hat X_0\srix z_0\right\|_2^2dt
\leq \mathbb E\int_0^T \left\| \smat C & -\hat C \srix \smat \Phi(t)& 0 \\ 0& \hat \Phi(t) \srix  \smat X_0 \\  \hat X_0\srix \right\|_F^2dt \left\| z_0\right\|_2^2\\
&= \mathbb E\int_0^T\trace\left(\smat C & -\hat C \srix \smat \Phi(t)& 0 \\ 0& \hat \Phi(t) \srix  \smat X_0 \\ \label{first_step_bound} \hat X_0\srix \smat X_0 \\  \hat X_0\srix^\top \smat \Phi(t)& 0 \\ 0& \hat \Phi(t) \srix^\top \smat C & -\hat C \srix^\top  \right) dt \left\| z_0\right\|_2^2,
 \end{align}
where $\left\| \cdot\right\|_2$ denotes the Euclidean norm. We apply Lemma \ref{lemdgl} to equation \eqref{errorsys:original} and obtain that the error covariance $F^{err}(t):= \mathbb E\left[\smat \Phi(t)& 0 \\ 0& \hat \Phi(t) \srix  \smat X_0 \\  \hat X_0\srix \smat X_0 \\  \hat X_0\srix^\top \smat \Phi(t)& 0 \\ 0& \hat \Phi(t) \srix^\top\right]$ satisfies \begin{align*} \frac{d}{dt}{F^{err}}(t) = \smat{A}& 0\\ 
0 &{\hat A}\srix F^{err}(t)+ F^{err}(t) \smat{A}& 0\\ 
0 &{\hat A}\srix^\top+ \sum_{i, j = 1}^{q} \smat {N}_i& 0 \\ 
0 &\hat N_i\srix F^{err}(t) \smat {N}_j& 0 \\ 
0 &\hat N_j\srix^\top\;k_{ij}\end{align*}
with $F^{err}(0)= \smat X_0 \\  \hat X_0\srix \smat X_0 \\  \hat X_0\srix^\top$.
 Consequently, the right lower block $\hat F(t):= \mathbb E \left[\hat \Phi(t) \hat X_0 \hat X_0^\top \hat \Phi(t)^\top\right]$ of $F^{err}(t)$ is the solution to 
 \begin{align}
\label{matrixequalforFred}
\dot {\hat F}(t) &=\hat {\mathcal L}[F(t)],\quad \hat F(0)=\hat X_0 \hat X_0^\top,\\ \nonumber
\bigg(\Leftrightarrow \frac{d}{dt}{\vect[\hat F(t)]}&=\hat{\mathcal K} \vect[\hat F(t)],\quad \vect[\hat F(0)]= \vect[\hat X_0 \hat X_0^\top]\bigg),
\end{align}
with $\hat {\mathcal L}$ denoting the reduced system Lyapunov operator, where the original matrices are replace by the reduced coefficients in \eqref{def_lyap_op}. $\hat {\mathcal L}$ is also supposed to be invertible throughout this paper. The corresponding Kronecker representation is $\hat {\mathcal K}:= I\otimes \hat A + \hat A\otimes I+ \sum_{i, j=1}^q \hat N_i \otimes \hat N_j k_{ij}$. Moreover, the right upper block $\tilde F(t):= \mathbb E \left[ \Phi(t) X_0 \hat X_0^\top \hat \Phi(t)^\top\right]$ of $F^{err}(t)$ satisfies
\begin{align}\label{matrixequalforFmixed}
\dot {\tilde F}(t) &= \tilde {\mathcal L}[\tilde F(t)],\quad \tilde F(0)=X_0 \hat X_0^\top,\\ \nonumber
\bigg(\Leftrightarrow \frac{d}{dt}{\vect[\tilde F(t)]}&=\tilde{\mathcal K} \vect[\tilde F(t)],\quad \vect[\tilde F(0)]= \vect[X_0 \hat X_0^\top]\bigg)
    \end{align}
with $\tilde {\mathcal L}(\tilde X):=  A\tilde X+\tilde X\hat A^\top+ \sum_{i, j=1}^q N_i \tilde  X \hat N_j^\top k_{ij}$
being the mixed operator containing both reduced and original system matrices. In addition, the associated Kronecker matrix is  $\tilde {\mathcal K}:= I\otimes A + \hat A\otimes I+ \sum_{i, j=1}^q \hat N_i \otimes N_j k_{ij}$. Multiplying out the matrix products in \eqref{first_step_bound}, 
we obtain the desired error measure that we formulate in the following lemma.
\begin{lem}\label{lemma_error_measure}
Let $y$ be the quantity of interest given in \eqref{stochout} and $\hat y$ in \eqref{red_stochout} its potential approximation. Then, we have
 \begin{align}\label{error_measure_bound}
\mathbb E\int_0^T \left\|y(t)-\hat y(t) \right\|_2^2dt \leq \left(\trace(C P(T) C^\top)-2 \trace(C \tilde P(T) \hat C^\top)+\trace(\hat C \hat P(T) \hat C^\top) \right)\left\|z_0\right\|_2^2, \end{align}
where $P(T):=\int_0^T F(t)dt$, $\hat P(T):=\int_0^T \hat F(t)dt$, $\tilde P(T):=\int_0^T \tilde F(t)dt$  are the integrals/time-averages of the covariance functions $F, \tilde F$ and $\hat F$ given by the equations \eqref{matrixdgl},   \eqref{matrixequalforFred} and \eqref{matrixequalforFmixed}.
\end{lem}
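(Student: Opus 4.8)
The plan is to pick up the calculation exactly where the excerpt leaves off at \eqref{first_step_bound}, where the output error has already been bounded by a time integral of a trace of fundamental solutions times $\|z_0\|_2^2$. It therefore remains only to evaluate this trace integral in terms of the averaged covariances $P(T)$, $\hat P(T)$ and $\tilde P(T)$. First I would use the linearity of the expectation together with the identity $\mathbb E\,\trace(\cdot)=\trace(\mathbb E[\cdot])$ to move $\mathbb E$ inside the trace, and pull the constant matrix $\smat C & -\hat C\srix$ out of the expectation. The inner expectation is then precisely the error covariance $F^{err}(t)$ defined above, so the integrand reduces to $\trace\big(\smat C & -\hat C\srix F^{err}(t)\smat C & -\hat C\srix^\top\big)$.

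Next I would exploit the block structure of $F^{err}(t)$. Since the fundamental solution of the stacked system \eqref{errorsys:original} is block diagonal, namely $\diag(\Phi(t),\hat\Phi(t))$, the matrix products factor block-wise, and the three distinct blocks of $F^{err}(t)$ are exactly $F(t)$, $\hat F(t)$ and $\tilde F(t)$, whose matrix differential equations \eqref{matrixdgl}, \eqref{matrixequalforFred} and \eqref{matrixequalforFmixed} are already established. Writing $F^{err}(t)=\smat F(t) & \tilde F(t) \\ \tilde F(t)^\top & \hat F(t)\srix$ and multiplying out the block product yields
\[
\smat C & -\hat C\srix F^{err}(t)\smat C & -\hat C\srix^\top = C F(t) C^\top - C\tilde F(t)\hat C^\top - \hat C \tilde F(t)^\top C^\top + \hat C \hat F(t)\hat C^\top .
\]
Taking the trace and using $\trace(M)=\trace(M^\top)$ to identify $\trace(\hat C\tilde F(t)^\top C^\top)=\trace(C\tilde F(t)\hat C^\top)$, the two cross terms collapse into the single contribution $-2\,\trace(C\tilde F(t)\hat C^\top)$.

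Finally I would integrate over $[0,T]$, interchanging the finite time integral with the trace by linearity, and substitute $P(T)=\int_0^T F(t)\,dt$, $\hat P(T)=\int_0^T\hat F(t)\,dt$ and $\tilde P(T)=\int_0^T\tilde F(t)\,dt$ to arrive at \eqref{error_measure_bound}. The argument is essentially mechanical; the only points needing care are the symmetrization of the cross term through transpose invariance of the trace and the interchanges of expectation, trace and integration, all of which are justified because the integrand is continuous on the compact interval $[0,T]$ and the covariance functions are finite. The single step doing genuine work is the identification of the off-diagonal block of $F^{err}(t)$ with $\tilde F(t)$; this is where the coupling between the original and reduced dynamics enters, but it has already been supplied by \eqref{matrixequalforFmixed}.
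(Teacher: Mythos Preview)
Your proposal is correct and follows exactly the paper's route: the paper derives the lemma by ``multiplying out the matrix products in \eqref{first_step_bound}'', using the block structure of $F^{err}(t)$ with blocks $F(t)$, $\tilde F(t)$, $\hat F(t)$ and then integrating. You have simply spelled out those steps in full, including the transpose-invariance of the trace that merges the two cross terms.
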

The aim is to derive a system \eqref{red_system} with a small bound in \eqref{error_measure_bound} leading to an accurate approximation of $y$ by $\hat y$. In order to achieve this goal necessary conditions for local optimality shall be determined and subsequently a reduced model is constructed that has coefficients being very close to a local minimum of the right-hand side of \eqref{error_measure_bound}. Notice that an accurate $L^2$-estimate is beneficial recalling that conditional expectations like in \eqref{continuation_function} need to be approximated. Such conditional expectations can be interpreted as  orthogonal projections in $L^2(\Omega, \mathcal F, \mathbb P)$.\smallskip

Before we compute the above mentioned necessary optimality conditions for the error bound in \eqref{error_measure_bound}, the dual system of \eqref{stochstate} and its covariance functions are discussed. Those play an important role in the formulation of the optimality conditions.

\paragraph{Dual state equations and their covariance functions.}
Talking about the dual state variable of \eqref{stochstate}, we mean the process $x_d$ satisfying \begin{align}\label{stochstate_dual}
 dx_d(t) &= A^\top x_d(t)dt + \sum_{i=1}^{q} N_i^\top x_d(t-) dM_i(t),\quad x_d(0) = x_{d, 0} = C^\top z_{d, 0}
\end{align}
with $z_{d, 0}\in \mathbb R^p$ being the coefficients of the expansion belonging to the generic initial state $x_{d, 0}$. Let $\Phi_d$ and $\hat \Phi_d$ denote the fundamental solutions to \eqref{stochstate_dual} and the dual reduced system, respectively. The dual covariances will play an essential role in the construction of the reduced system \eqref{red_system}. As in the previous paragraph, they are introduced as $G(t):= \mathbb E \left[\Phi_d(t) C^\top C \Phi_d(t)^\top\right]$, $\hat G(t):= \mathbb E \left[\hat \Phi_d(t) \hat C^\top \hat C \hat \Phi_d(t)^\top\right]$ and $\tilde G(t):= \mathbb E \left[\Phi_d(t) C^\top \hat C \hat \Phi_d(t)^\top\right]$. They solve
\begin{align}
\label{matrixequalforG}
\dot {G}(t) &= {\mathcal L}^*[G(t)],\quad G(0)= C^\top C,\\ \nonumber
\bigg(\Leftrightarrow \frac{d}{dt}{\vect[G(t)]}&={\mathcal K}^\top \vect[ G(t)],\quad \vect[G(0)]= \vect[C^\top C]\bigg),\\
\label{matrixequalforGred}
\dot {\hat G}(t) &= \hat {\mathcal L}^*[\hat G(t)],\quad \hat G(0)=\hat C^\top \hat C,\\ \nonumber
\bigg(\Leftrightarrow \frac{d}{dt}{\vect[\hat G(t)]}&=\hat{\mathcal K}^\top \vect[\hat G(t)],\quad \vect[\hat G(0)]= \vect[\hat C^\top \hat C]\bigg),\\
\label{matrixequalforGmixed}
\dot {\tilde G}(t) &= \tilde {\mathcal L}^*[\tilde G(t)],\quad \tilde G(0)=C^\top \hat C,\\ \nonumber
\bigg(\Leftrightarrow \frac{d}{dt}{\vect[\tilde G(t)]}&=\tilde{\mathcal K}^\top \vect[\tilde G(t)],\quad \vect[\tilde G(0)]= \vect[\hat C^\top C]\bigg).
    \end{align}
We introduce the corresponding time averaged covariance functions by $ Q(T):=\int_0^T G(t)dt$, $\hat Q(T):=\int_0^T \hat G(t)dt$ and $\tilde Q(T):=\int_0^T \tilde G(t)dt$.
  
\subsection{Extension to Heston-type models}\label{sec_heston}

In this section, we briefly discuss possible extensions of our results to asset price models with stochastic volatility. A famous approach in this direction was given by Heston \cite{heston} using a scalar Cox–Ingersoll–Ross (CIR) process \cite{CIR}. There is many different extensions to matrix-valued volatility processes such as in \cite{wishart_vol}, where a Wishart processes \cite{wiashart} was involved. It is also possible to study volatilities with jumps \cite{jump_vol}. There,  matrix-valued Ornstein-Uhlenbeck type volatilities driven by a L\'evy process are exploited. We refer to \cite{Cuchiero} for a more detailed discussion. In this paper, we can also consider matrix-valued stochastic volatilities. However, we need them to be bounded in a certain sense. A possible extension of \eqref{original_system} can be \begin{subequations}\label{original_system_heston}
\begin{align}\label{heston_state}
 dx_H(t) &= Ax_H(t)dt + \begin{bmatrix} N_1 x_H(t)& N_2 x_H(t)& \dots
 &N_q x_H(t)\end{bmatrix} \mathbb K(t)^{\frac{1}{2}} dB(t),\\ 
y_H(t) &= Cx_H(t),\quad t\in [0, T],
\end{align}
\end{subequations}
where for simplicity $B$ is a standard Brownian motion and $\mathbb K$ is an $(\mathcal F_t)_{t\in [0, T]}$-adapted and $\mathbb R^{q\times q}$-valued process that satisfies \begin{align}\label{bounded_vol}
   \mathbb K(t) \leq K_M \quad \mathbb P-\text{a.s. and for all } t\in [0, T],                                                                                                                                                                                                                                                        \end{align}
with $K_M$ being a positive semidefinite matrix and with ``$\leq$'' being meant in terms of definiteness. A more concrete choice can be $\mathbb K(t) = v(t) K$, where $K$ is a constant positive semidefinite matrix, $v(t) = \min\{v_{cir}(t), c\}$, $v_{cir}$ is a scalar  CIR process and $c>0$ is a constant. This latter setting was, e.g., investigated in \cite{mor_heston}. Instead of a scalar CIR, one might also think of involving multiple one, e.g., by setting $\mathbb K(t) = \diag(v_{1}(t),\ldots, v_{q}(t))$, where $v_j(t) = \min\{v_{j, cir}(t), c_j\}$ are of the form $v$ is given above. The following lemma shows why \eqref{bounded_vol} is required.
\begin{lem}\label{lemineqdgl}
Let $\Phi_H$ be the fundamental solution of \eqref{heston_state} with $\mathbb K$ as in \eqref{bounded_vol}. Then, 
$F_H(t):=\mathbb E \left[\Phi_H(t) X_0 X_0^\top \Phi_H(t)^\top\right]$, $t\in [0, T]$, satisfies the matrix inequality \begin{align}\label{matrix_ineq}
\dot{F}_H(t)\leq \mathcal L[F_H(t)],\quad F_H(0)= X_0 X_0^\top.
\end{align}
Moreover, we have $F_H(t)\leq F(t)= \mathbb E \left[\Phi(t) X_0 X_0^\top \Phi(t)^\top\right]$, $t\in [0, T]$, where $F$ solves \eqref{matrixdgl}.
\end{lem}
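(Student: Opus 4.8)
\section*{Proof proposal}

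The plan is to first establish the differential inequality \eqref{matrix_ineq} by an explicit It\^o computation on the covariance, and then to deduce the pointwise domination $F_H(t)\leq F(t)$ from it by a comparison argument based on the positivity of the Lyapunov flow. First I would write the fundamental solution $\Phi_H$ as the solution of the matrix SDE obtained from \eqref{heston_state} by replacing $x_H$ with $\Phi_H$ and setting $\Phi_H(0)=I$. Collecting the Brownian integrators column-wise, the diffusion term rewrites as $\sum_{i=1}^q N_i\,\Phi_H(t)\,d\tilde M_i(t)$ with $d\langle \tilde M_i,\tilde M_j\rangle(t)=\mathbb{K}_{ij}(t)\,dt$, so that the instantaneous covariation of the driving noise equals $\mathbb{K}(t)$. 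Applying the It\^o product rule to $Y(t):=\Phi_H(t)X_0X_0^\top\Phi_H(t)^\top$, the martingale part has zero expectation under the integrability guaranteed by the boundedness of $\mathbb{K}$, while the drift yields
\[
\dot F_H(t)=A F_H(t)+F_H(t)A^\top+\sum_{i,j=1}^q N_i\,\mathbb{E}\!\left[Y(t)\,\mathbb{K}_{ij}(t)\right]N_j^\top .
\]
Subtracting this identity from the Lyapunov expression $\mathcal{L}[F_H(t)]$, in which the constant entries $k_{ij}=(K_M)_{ij}$ appear, leaves the gap $\sum_{i,j}N_i\,\mathbb{E}[Y(t)(k_{ij}-\mathbb{K}_{ij}(t))]N_j^\top$, so that proving \eqref{matrix_ineq} reduces to showing this matrix is positive semidefinite.

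This is where the hypothesis \eqref{bounded_vol} enters, and it is the crux of the argument. Since $Y(t)=(\Phi_H(t)X_0)(\Phi_H(t)X_0)^\top\geq 0$ and $D(t):=K_M-\mathbb{K}(t)\geq 0$ almost surely, it suffices to verify the pointwise algebraic fact that $\sum_{i,j}N_i\,Y\,D_{ij}\,N_j^\top\geq 0$ whenever $Y\geq 0$ and $D=(D_{ij})\geq 0$; taking expectations then preserves the inequality. For a test vector $v$, setting $u_i:=N_i^\top v$ and factoring $Y=LL^\top$, the quadratic form becomes $\sum_{i,j}D_{ij}\langle L^\top u_i, L^\top u_j\rangle$, which rearranges coordinate-by-coordinate into a sum of quadratic forms in $D$ and is therefore nonnegative. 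The main obstacle is exactly this coupling: because $\mathbb{K}(t)$ is random and correlated with $Y(t)$, the scalar $\mathbb{K}_{ij}(t)$ cannot be pulled out of the expectation, so the estimate must be carried out pointwise in $\omega$ before integrating, and it must exploit positive semidefiniteness of the whole matrix $D(t)$ rather than mere entrywise bounds.

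For the second statement I would set $E(t):=F(t)-F_H(t)$, so that $E(0)=0$ and, by linearity of $\mathcal{L}$ together with the inequality just proved, $R(t):=\dot E(t)-\mathcal{L}[E(t)]=\mathcal{L}[F_H(t)]-\dot F_H(t)\geq 0$. Writing $e^{t\mathcal{L}}$ for the semigroup generated by $\mathcal{L}$, variation of constants gives $E(t)=\int_0^t e^{(t-s)\mathcal{L}}[R(s)]\,ds$. Since $K_M\geq 0$, a Cholesky factorization lets me rewrite $\mathcal{L}(X)=AX+XA^\top+\sum_\ell \tilde N_\ell X\tilde N_\ell^\top$, which exhibits $e^{t\mathcal{L}}$ as a positive semigroup mapping positive semidefinite matrices to positive semidefinite matrices; equivalently, $e^{t\mathcal{L}}[X_0X_0^\top]=F(t)\geq 0$ reflects its covariance interpretation in \eqref{matrixdgl}. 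Each integrand $e^{(t-s)\mathcal{L}}[R(s)]$ is then positive semidefinite, whence $E(t)\geq 0$, i.e. $F_H(t)\leq F(t)$. This final step is precisely the matrix Gronwall/comparison lemma alluded to in the introduction.
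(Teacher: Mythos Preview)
Your argument is correct and follows the same overall strategy as the paper: establish the differential inequality via It\^o's product rule and the pointwise bound $\mathbb{K}(t)\leq K_M$, then deduce $F_H\leq F$ by a positivity/Gronwall comparison. Two minor differences are worth noting. First, the paper decomposes $F_H(t)=\sum_i\mathbb{E}[x_H^{(i)}(t)x_H^{(i)}(t)^\top]$ into rank-one terms so that the cross-variation appears directly in block form as $[N_1x_H\mid\cdots\mid N_qx_H]\,\mathbb{K}(t)\,[N_1x_H\mid\cdots\mid N_qx_H]^\top$, making the inequality immediate without your separate algebraic lemma on $\sum_{i,j}N_iY D_{ij}N_j^\top\geq 0$; your lemma is of course correct and slightly more general. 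Second, for the comparison step the paper simply invokes an external matrix Gronwall result (\cite[Lemma~2.3]{mor_heston}) based on $\mathcal{L}$ being resolvent positive, whereas you give a self-contained proof via variation of constants and positivity of $e^{t\mathcal{L}}$ after rewriting the noise part in Cholesky form $\sum_\ell\tilde N_\ell X\tilde N_\ell^\top$. Both routes lead to the same conclusion; yours is more explicit, the paper's more economical.
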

\begin{proof}
We can write $F_H(t)=\sum_{i=1}^m\mathbb E \left[\Phi_H(t) X_0 e_i e_i^\top X_0^\top \Phi_H(t)^\top\right]$, where $e_i$ is the $i$th unit vector in $\mathbb R^m$. Now, $\Phi_H(\cdot) X_0 e_i$ is the solution $x_H$ of \eqref{heston_state} with initial state $x_0= X_0 e_i$. For that reason, it remains to show that inequality \eqref{matrix_ineq} holds for the function $\mathbb E \left[x_H(t) x_H(t)^\top\right]$, $t\in [0, T]$, using a generic (deterministic) initial condition $x_0$. We apply Ito's product rule (see, e.g., \cite{Oksendal}) and obtain \begin{align}\nonumber
&d\big(x_H(t) x_H(t)^\top\big)=  \big(dx_H(t)\big) x_H(t)^\top + x_H(t) d\big(x_H(t)^\top\big) \\ \nonumber
&+ \begin{bmatrix} N_1 x_H(t)& N_2 x_H(t)& \dots
 &N_q x_H(t)\end{bmatrix} \mathbb K(t) \begin{bmatrix} N_1 x_H(t)& N_2 x_H(t)& \dots
 &N_q x_H(t)\end{bmatrix}^\top dt\\ \label{heston_cov}
 &\leq \big(dx_H(t)\big) x_H(t)^\top + x_H(t) d\big(x_H(t)^\top\big)+\sum_{i, j = 1}^{q} N_i x_H(t) x_H(t)^\top {N}_j^\top\;k_{ij} dt
\end{align}
exploiting \eqref{bounded_vol} and rewriting the matrix multiplication after replacing $\mathbb K(t)$ by $K_M=(k_{ij})$. With \eqref{heston_state} and mean zero property of the Ito integral (see \cite{Oksendal}), we obtain $\mathbb E\left[\big(dx_H(t)\big) x_H(t)^\top\right] = A \,\mathbb E\left[x_H(t) x_H(t)^\top\right] dt$. Inserting this into \eqref{heston_cov} after the expectation has been applied leads to $\frac{d}{dt} \mathbb E \left[x_H(t) x_H(t)^\top\right]\leq\mathcal L\left[\mathbb E \left[x_H(t) x_H(t)^\top\right]\right]$. This concludes the first part of the proof. Now, we exploit the Gronwall result of \cite[Lemma 2.3]{mor_heston} that tells that the solution of \eqref{matrixdgl} dominates all functions satisfying \eqref{matrix_ineq} in terms of definiteness if $\mathcal L$ is a resolvent positive operator. This property holds true for the Lyapunov operators considered here. This yields the claim.
\end{proof}
Now, the reduced model candidate for \eqref{original_system_heston} is \begin{subequations}\label{original_system_heston_reduced}
\begin{align}
 d\hat x_H(t) &= \hat A \hat x_H(t)dt + \begin{bmatrix} \hat N_1 \hat x_H(t)& \hat N_2 \hat x_H(t)& \dots
 &\hat N_q \hat x_H(t)\end{bmatrix} \mathbb K(t)^{\frac{1}{2}} dB(t),\\ 
\hat y_H(t) &= \hat C \hat x_H(t),\quad t\in [0, T].
\end{align}
\end{subequations}
with $\hat x_H(t)\in \mathbb R^{\hat n}$ and fundamental solution $\hat \Phi_H$. We can define an error system for \eqref{original_system_heston} and \eqref{original_system_heston_reduced}, as done in \eqref{errorsys:original}, that has a fundamental solution $\smat \Phi_H(t)& 0 \\ 0& \hat \Phi_H(t) \srix$. Following the steps in \eqref{first_step_bound}, we obtain \begin{align}\label{heston_error_first}
&\mathbb E\int_0^T \left\|y_H(t)-\hat y_H(t) \right\|_2^2dt \\ \nonumber
&\leq \mathbb E\int_0^T\trace\left(\smat C & -\hat C \srix \smat \Phi_H(t)& 0 \\ 0& \hat \Phi_H(t) \srix  \smat X_0 \\  \hat X_0\srix \smat X_0 \\  \hat X_0\srix^\top \smat \Phi_H(t)& 0 \\ 0& \hat \Phi_H(t) \srix^\top \smat C & -\hat C \srix^\top  \right) dt \left\| z_0\right\|_2^2.
 \end{align}
Using Lemma \ref{lemineqdgl}, it holds that \begin{align*}                                        
\mathbb E \left[\smat \Phi_H(t)& 0 \\ 0& \hat \Phi_H(t) \srix  \smat X_0 \\  \hat X_0\srix \smat X_0 \\  \hat X_0\srix^\top \smat \Phi_H(t)& 0 \\ 0& \hat \Phi_H(t) \srix^\top\right]\leq \mathbb E \left[\smat \Phi(t)& 0 \\ 0& \hat \Phi(t) \srix  \smat X_0 \\  \hat X_0\srix \smat X_0 \\  \hat X_0\srix^\top \smat \Phi(t)& 0 \\ 0& \hat \Phi(t) \srix^\top\right],
\end{align*}
i.e., the error covariance of the Black-Scholes model dominates the one of the Heston-type model. Applying this to \eqref{heston_error_first}, we find \begin{align*}
\mathbb E\int_0^T \left\|y_H(t)-\hat y_H(t) \right\|_2^2dt \leq \left(\trace(C P(T) C^\top)-2 \trace(C \tilde P(T) \hat C^\top)+\trace(\hat C \hat P(T) \hat C^\top) \right)\left\|z_0\right\|_2^2,                                                                                                                                                                                                                                                                                                       \end{align*}
where the bound is the same as in Lemma \ref{lemma_error_measure}. Consequently, the coefficients $\hat A, \hat N_i, \hat X_0, \hat C$ leading to a small bound in \eqref{error_measure_bound} also yield a good reduced system \eqref{original_system_heston_reduced} for \eqref{original_system_heston}. This means that all results of this paper can immediately transferred to models with stochastic but bounded volatility (in the sense of \eqref{bounded_vol}). 

\section{Reduced order models based on error bound minimization}\label{section3}
Below, we construct a reduced system \eqref{red_system} potentially having a small (almost locally minimal) bound \eqref{error_measure_bound}. This (hopefully) provides an accurate $L^2$-approximation for the output $y$ of \eqref{original_system} exploiting Lemma \ref{lemma_error_measure}. With this $L^2$-estimate, a high accuracy in \eqref{approx_stop} shall subsequently be achieved.
\subsection{Necessary optimality conditions for the $L^2$-error measure of Lemma \ref{lemma_error_measure} }
Based on the bound in \eqref{error_measure_bound}, we seek for necessary conditions for local optimality of the expression 
\begin{align}\label{calE}
\mathcal E(\hat A, \hat N_i, \hat X_0, \hat C) := \trace(\hat C \hat P(T) {\hat C}^\top) - 2 \trace(C \tilde P(T) {\hat C}^\top)= \langle {\hat C}^\top \hat C, \hat P(T)\rangle_F - 2 \langle  C^\top {\hat C}, \tilde P(T)\rangle_F.
\end{align}
The following theorem is one of our main results telling the criteria for a good reduced system.
\begin{thm}\label{thm_opt_cond}
Given a reduced model \eqref{red_system} that is locally optimal with respect to the bound of Lemma \ref{lemma_error_measure}. Then, 
for the matrices $\hat A, \hat N_i, \hat X_0, \hat C$, it holds that 
\begin{equation}\label{optcond}
\begin{aligned}
(a)\quad&\hat C \hat P(T) = C \tilde P({T}),\quad (b)\quad \hat Q(T)\hat X_0  =  \tilde Q({T})^\top X_0\\
(c)\quad & \int_0^T  \hat Q(T-t)\hat F(t) dt  =\int_0^T  \tilde Q(T-t)^\top \tilde F(t) dt, \\
(d)\quad& \int_0^T \hat Q(T-t) \left(\sum_{j=1}^{q}\hat N_j k_{ij} \right) \hat F(t) dt =  \int_0^T \tilde Q(T-t)^\top \left(\sum_{j=1}^{q} N_j k_{ij}\right) \tilde F(t) dt
\end{aligned}
\end{equation}
for $i=1,\dots, q$ and where all covariance matrices entering \eqref{optcond} are defined in Section \ref{blabla}.
\end{thm}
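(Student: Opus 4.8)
The plan is to compute the Fréchet derivative of $\mathcal E$ in \eqref{calE} with respect to each of the four reduced coefficients and to impose that it vanishes at a local minimizer. The four conditions (a)--(d) will arise, respectively, from the variations $\partial_{\hat C}$, $\partial_{\hat X_0}$, $\partial_{\hat A}$ and $\partial_{\hat N_i}$. Throughout I will use that $F,\hat F,G,\hat G$ and their time averages $P,\hat P,Q,\hat Q$ are symmetric (being covariances), whereas $\tilde F,\tilde G,\tilde P,\tilde Q$ are rectangular, a distinction that governs the transposes appearing in (b)--(d).

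The first preparatory step is to rewrite $\hat P(T)$ and $\tilde P(T)$ through the solution semigroups of the Lyapunov operators: since $\hat F(t)$ solves $\dot{\hat F}=\hat{\mathcal L}[\hat F]$, we have $\hat F(t)=\operatorname{e}^{\hat{\mathcal L}t}[\hat X_0\hat X_0^\top]$ and hence $\hat P(T)=\big(\int_0^T\operatorname{e}^{\hat{\mathcal L}t}\,dt\big)[\hat X_0\hat X_0^\top]$, and analogously for $\tilde P(T)$. Pairing with $\hat C^\top\hat C$ and moving the adjoint semigroup onto the initial datum, I recognize the averaged dual covariances, obtaining the compact identities $\langle\hat C^\top\hat C,\hat P(T)\rangle_F=\langle\hat Q(T),\hat X_0\hat X_0^\top\rangle_F$ and $\langle C^\top\hat C,\tilde P(T)\rangle_F=\langle\tilde Q(T),X_0\hat X_0^\top\rangle_F$. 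This makes $\mathcal E$ explicit and quadratic in $\hat X_0$, and differentiating $\trace(\hat X_0^\top\hat Q(T)\hat X_0)-2\trace(\hat X_0^\top\tilde Q(T)^\top X_0)$ in $\hat X_0$ (noting $\hat Q,\tilde Q$ do not depend on $\hat X_0$) yields $2\hat Q(T)\hat X_0-2\tilde Q(T)^\top X_0=0$, i.e. condition (b). Condition (a) is even more direct: $\hat P(T)$ and $\tilde P(T)$ do not depend on $\hat C$, so differentiating $\langle\hat C^\top\hat C,\hat P(T)\rangle_F-2\langle C^\top\hat C,\tilde P(T)\rangle_F$ in $\hat C$ and using the symmetry of $\hat P(T)$ gives $2\hat C\hat P(T)-2C\tilde P(T)=0$.

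The substantive part is (c) and (d), where $\hat A$ and $\hat N_i$ enter $\mathcal E$ implicitly through the Lyapunov flows. For a perturbation $\hat A\mapsto\hat A+\epsilon\,\delta\hat A$ the operator perturbs by $\delta\hat{\mathcal L}(X)=\delta\hat A\,X+X\,\delta\hat A^\top$, and the induced first-order variation of $\hat F$ solves $\dot{\delta\hat F}=\hat{\mathcal L}[\delta\hat F]+\delta\hat{\mathcal L}[\hat F]$ with $\delta\hat F(0)=0$; Duhamel's formula gives $\delta\hat F(t)=\int_0^t\operatorname{e}^{\hat{\mathcal L}(t-s)}[\delta\hat{\mathcal L}[\hat F(s)]]\,ds$. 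Integrating over $t\in[0,T]$, pairing with $\hat C^\top\hat C$, transferring the adjoint semigroup to produce $\hat G(t-s)$, and then exchanging the order of integration over the triangle $0\le s\le t\le T$ followed by the substitution $u=t-s$ collapses the inner integral into $\int_0^{T-s}\hat G(u)\,du=\hat Q(T-s)$. The result is $\delta_{\hat A}\langle\hat C^\top\hat C,\hat P(T)\rangle_F=\int_0^T\langle\hat Q(T-s),\delta\hat{\mathcal L}[\hat F(s)]\rangle_F\,ds$; inserting $\delta\hat{\mathcal L}[\hat F(s)]=\delta\hat A\,\hat F(s)+\hat F(s)\,\delta\hat A^\top$ and using cyclicity of the trace together with the symmetry of $\hat Q$ and $\hat F$ shows that both terms contribute equally, yielding the gradient $2\int_0^T\hat Q(T-s)\hat F(s)\,ds$. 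Running the identical argument on the mixed term, where $\delta\tilde{\mathcal L}(\tilde X)=\tilde X\,\delta\hat A^\top$ (only the $\tilde X\hat A^\top$ summand depends on $\hat A$) and $\tilde F,\tilde Q$ are rectangular, produces the gradient $2\int_0^T\tilde Q(T-s)^\top\tilde F(s)\,ds$; equating the two is condition (c).

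Condition (d) follows by the same mechanism applied to $\hat N_i$, the only extra bookkeeping being the double sum in $\hat{\mathcal L}$. Perturbing a single $\hat N_i$ gives $\delta\hat{\mathcal L}(X)=\delta\hat N_i\,X\,\hat S_i^\top+\hat S_i\,X\,\delta\hat N_i^\top$ with $\hat S_i:=\sum_{j=1}^q\hat N_j k_{ij}$, where I use the symmetry $k_{ij}=k_{ji}$ of $K_M$ to merge the two contributions of the double sum into the single factor $\hat S_i$; likewise $\delta\tilde{\mathcal L}(\tilde X)=S_i\,\tilde X\,\delta\hat N_i^\top$ with $S_i:=\sum_{j=1}^q N_j k_{ij}$ (only the $b=i$ term of $\sum_{ab}N_a\tilde X\hat N_b^\top k_{ab}$ survives). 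Feeding these into the same $\hat Q(T-s)$- and $\tilde Q(T-s)$-paired integrals and reading off the gradient via trace cyclicity gives $2\int_0^T\hat Q(T-s)\hat S_i\hat F(s)\,ds$ and $2\int_0^T\tilde Q(T-s)^\top S_i\tilde F(s)\,ds$, whose equality is exactly (d). The main obstacle is precisely this implicit dependence handled in (c)--(d): getting the sensitivity of the covariance right via Duhamel, and then performing the swap of the double time integral together with the change of variables that reconstructs the time-averaged dual covariance $\hat Q(T-s)$; the symmetry/transpose bookkeeping and the use of $k_{ij}=k_{ji}$ to collapse the double sum are the places where sign and transpose errors are easiest to make.
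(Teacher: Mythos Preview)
Your proof is correct and follows a genuinely different route from the paper's. The paper introduces auxiliary \emph{infinite-time} Gramians $\hat Q,\tilde Q$ as solutions of the algebraic Lyapunov equations $\hat{\mathcal L}^*[\hat Q]=-\hat C^\top\hat C$ and $\tilde{\mathcal L}^*[\tilde Q]=-C^\top\hat C$ (this is where the standing invertibility assumption on the Lyapunov operators is used), passes to the vectorized/Kronecker representations $\hat{\mathcal K},\tilde{\mathcal K}$, and then exploits the identity $(I-\expn^{\hat{\mathcal K}^\top T})\vect(\hat Q)=\vect(\hat Q(T))$ to convert expressions involving the auxiliary $\hat Q,\tilde Q$ back into the finite-horizon averages $\hat Q(T),\tilde Q(T)$. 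In particular, for (b) the paper computes $\partial_{\hat x_{k\ell}}\hat F(T)$ via vectorization and then unwinds through this identity, and for (c)--(d) it represents $\partial_z\hat F(T)$ by Duhamel and again passes through $\hat Q$ before landing on $\hat Q(T-t)$.

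You bypass all of this by working with the semigroups $\operatorname{e}^{\hat{\mathcal L}t},\operatorname{e}^{\tilde{\mathcal L}t}$ and their adjoints directly. The dual identity $\langle\hat C^\top\hat C,\hat P(T)\rangle_F=\langle\hat Q(T),\hat X_0\hat X_0^\top\rangle_F$ makes (b) a one-line differentiation, and for (c)--(d) your Fubini-on-the-triangle plus the substitution $u=t-s$ produces $\hat Q(T-s)$ immediately without ever invoking the infinite-time Gramians or the Kronecker matrices. Your argument is therefore more conceptual, shorter, and slightly more general (it does not rely on invertibility of $\hat{\mathcal L},\tilde{\mathcal L}$ for this theorem). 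The paper's route, on the other hand, sets up the vectorized machinery that is reused later (e.g., in Theorem~\ref{thm:error_opt_cond} and Section~\ref{mor_stab_sys}), so the detour through $\hat Q,\tilde Q$ is partly an investment in notation rather than a necessity of the proof.
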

\begin{proof}
Since the proof of this theorem requires several long and technical calculations, it is moved to Appendix \ref{app_pending_proof} in order to improve the readability of this paper.
\end{proof}
The result of Theorem \ref{thm_opt_cond}  is important as it allows to reduce the problem of selecting suitable projection matrices $V$ and $W$ for \eqref{original_system} to identifying dominant subspaces of the matrix differential equations \eqref{matrixequalforFmixed} and \eqref{matrixequalforGmixed}. 
The following lemma shall emphasize this aspect and provide an intuition what is required to approximately satisfy the optimality conditions in Theorem \ref{thm_opt_cond} without aiming to be specific about the error terms. 
\begin{lem}\label{lem_cond_opt}
Given a reduced system \eqref{red_system} with coefficients of the form \eqref{red_mat_projection} and a small point-wise error in \begin{align}\label{almost_opt_cond}
V\hat F \approx \tilde F\quad\text{and}\quad W (V^\top W)^{-1} \hat G \approx \tilde G
             \end{align}
on $[0, T]$. Then, we have a small deviation in the optimality conditions in Theorem \ref{thm_opt_cond}. 
\end{lem}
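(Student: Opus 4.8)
The plan is to substitute the projection structure \eqref{red_mat_projection} together with the two hypotheses \eqref{almost_opt_cond} directly into each of the four conditions (a)--(d) of Theorem \ref{thm_opt_cond} and to check that each side collapses to the same expression up to the approximation error. Throughout I would use that $\hat F$ and $\hat G$ are symmetric (being expectations of matrices of the form $M M^\top$) and the elementary identity $(W^\top V)^{-1}W^\top V = I$. I would also rewrite the second hypothesis in transposed form: since $\hat G = \hat G^\top$ and $(V^\top W)^{-\top} = (W^\top V)^{-1}$, the relation $W(V^\top W)^{-1}\hat G \approx \tilde G$ is equivalent to $\tilde G^\top \approx \hat G\,(W^\top V)^{-1}W^\top$, which is the form needed to treat $\tilde Q^\top$.

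For (a) I would use $\hat C = CV$ and the first hypothesis: $\hat C\hat P(T) = CV\int_0^T \hat F(t)\,dt = C\int_0^T V\hat F(t)\,dt \approx C\int_0^T \tilde F(t)\,dt = C\tilde P(T)$, so the gap is controlled by $\|C\|\int_0^T \|V\hat F(t)-\tilde F(t)\|\,dt$. For (b) I would use $\hat X_0 = (W^\top V)^{-1}W^\top X_0$ and the transposed second hypothesis: $\tilde Q(T)^\top X_0 = \int_0^T \tilde G(t)^\top\,dt\,X_0 \approx \int_0^T \hat G(t)\,dt\,(W^\top V)^{-1}W^\top X_0 = \hat Q(T)\hat X_0$.

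Conditions (c) and (d) are the heart of the argument and use both hypotheses at once. Writing $\tilde Q(T-t)^\top = \int_0^{T-t}\tilde G(s)^\top\,ds$ and inserting the transposed second hypothesis pointwise on $[0,T]$ gives $\tilde Q(T-t)^\top \approx \hat Q(T-t)(W^\top V)^{-1}W^\top$, while the first hypothesis gives $\tilde F(t)\approx V\hat F(t)$. For (c) these combine to $\int_0^T \tilde Q(T-t)^\top\tilde F(t)\,dt \approx \int_0^T \hat Q(T-t)(W^\top V)^{-1}W^\top V\hat F(t)\,dt = \int_0^T \hat Q(T-t)\hat F(t)\,dt$, the cancellation $(W^\top V)^{-1}W^\top V = I$ producing exactly the left-hand side. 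For (d) the same substitution yields the middle factor $(W^\top V)^{-1}W^\top\big(\sum_{j} N_j k_{ij}\big)V$, which is precisely $\sum_{j}\hat N_j k_{ij}$ by \eqref{red_mat_projection}, so the right-hand side collapses to the left-hand side as well.

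The only genuine subtlety I anticipate is bookkeeping rather than conceptual: in (c) and (d) two approximate quantities are multiplied, so the deviation contains cross terms, but since $\hat Q(T-t)$, $\hat F(t)$ and the fixed matrices $V$, $W$, $(W^\top V)^{-1}$ and $\sum_{j} N_j k_{ij}$ are all bounded on the compact interval, the total gap is bounded by a constant times $\int_0^T\big(\|V\hat F(t)-\tilde F(t)\| + \|W(V^\top W)^{-1}\hat G(t)-\tilde G(t)\|\big)\,dt$, which is small by hypothesis. Getting the transposes and the order of the factors right in the second hypothesis is the main place where care is needed, and the statement deliberately leaves the error terms unquantified.
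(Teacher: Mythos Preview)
Your proposal is correct and follows essentially the same route as the paper's proof: integrate the pointwise hypotheses to get $V\hat P(t)\approx\tilde P(t)$ and $\tilde Q(t)^\top\approx\hat Q(t)(W^\top V)^{-1}W^\top$, then substitute into each of (a)--(d) using the projection forms \eqref{red_mat_projection} and the cancellation $(W^\top V)^{-1}W^\top V=I$. Your treatment is in fact slightly more explicit than the paper's, spelling out the transposition of the second hypothesis and the cross-term bookkeeping in (c) and (d), which the paper leaves implicit.
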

\begin{proof}
Since \eqref{almost_opt_cond} holds point-wise, the approximation is also accurate for the associated integrals, i.e., we have a small error in \begin{align}     \label{small_int_erro}                                                                                                                                   
V\hat P(t) \approx \tilde P(t)\quad\text{and}\quad W (V^\top W)^{-1} \hat Q(t) \approx \tilde Q(t)                                                                                                                                             \end{align}
for all $t \in [0, T]$. Inserting \eqref{small_int_erro} for $t=T$ into the right-hand sides of \eqref{optcond} (a) and (b), we obtain 
$C \tilde P({T})\approx  C V\hat P(T) = \hat C \hat P(T)$ and $\tilde Q({T})^\top X_0 \approx \hat Q(T)  (W^\top V)^{-1} W^\top X_0 = \hat Q(T)\hat X_0 $ 
using the definitions of $\hat X_0$ and $\hat C$ given in \eqref{red_mat_projection}. Exploiting \eqref{almost_opt_cond} and \eqref{small_int_erro} for the right-hand sides of \eqref{optcond} (c) and (d) yields  $\int_0^T  \tilde Q(T-t)^\top \tilde F(t) dt \approx \int_0^T  \hat Q(T-t) (W^\top V)^{-1} W^\top V\hat F(t) dt = \int_0^T  \hat Q(T-t)\hat F(t) dt$ and $\int_0^T \tilde Q(T-t)^\top \left(\sum_{j=1}^{q} N_j k_{ij}\right) \tilde F(t) dt\approx \int_0^T \hat Q(T-t) (W^\top V)^{-1} W^\top\left(\sum_{j=1}^{q} N_j k_{ij}\right) V \hat F(t) dt =\int_0^T \hat Q(T-t) \left(\sum_{j=1}^{q}\hat N_j k_{ij} \right) \hat F(t) dt$ using the definition of $\hat N_j$ in \eqref{red_mat_projection}. Therefore, a small deviation is given in \eqref{optcond}.
\end{proof}
Conditions \eqref{almost_opt_cond} mean that the columns of the mixed (dual) covariance functions $\tilde F$ and $\tilde G$ have to take values close to the images of $V$ and $W$, respectively. However, these sufficient conditions for a small deviation can immediately be weakened for (a) and (b) in Theorem \ref{thm_opt_cond}. Assuming the reduced coefficients to be like in \eqref{red_mat_projection}, (a) and (b) become 
\begin{equation}\label{averge_for_aandb}\begin{aligned}
C \int_0^T V \hat F(t) - \tilde F(t) dt &=   CV\hat P(T) - C\tilde P(T) = 0,  \\
 X_0^\top \int_0^T W (V^\top W)^{-1} \hat G(t) - \tilde G(t) dt &= X_0^\top  W (V^\top W)^{-1}\hat Q(T) - X_0^\top  \tilde Q(T)=0                                                                                                                                               \end{aligned}
 \end{equation}
meaning that is enough to have a time average approximation in \eqref{almost_opt_cond} instead of having it point-wise. However, a good time average approximation, seems not to ensure the same in equations (c) and (d) of Theorem \ref{thm_opt_cond}. Therefore, the gap in \eqref{almost_opt_cond} shall be an indicator for the approximation error between \eqref{original_system} and \eqref{red_system}. By the error propagation in the reduction procedure, it is very likely that the deviation in \eqref{almost_opt_cond} is largest at time $T$. For that reason, the error is supposed to be checked there only. We will further emphasize the role of this terminal time covariance error in Section \ref{sec_part_method}.\smallskip

One might also think of constructing a reduced system  that exactly satisfies the optimality conditions of Theorem \ref{thm_opt_cond} but this is generally  a very hard task.

\subsection{Particular model reduction method}\label{sec_part_method}
For that reason, we continue with introducing an algorithm designed to meet the assumptions of Lemma \ref{lem_cond_opt}, i.e., we construct projection matrices $V$ and $W$ that represent bases for the dominant subspaces of the mixed covariances $\tilde F$ and $\tilde G$.  Such a scheme is presented in Algorithm \ref{algo:MBIRKA}. It is an iterative method since $\tilde F$ and $\tilde G$ themselves depend  on the matrices $V$ and $W$ that we desire to compute from these mixed covariances.
\begin{remark}
Setting $S=I$ in step \ref{step3} of Algorithm \ref{algo:MBIRKA}, we see that the equations in step \ref{step4} are the ones for $\tilde F$ and $\tilde G$. General state space transformations by $S$ basically do not change the dominant subspaces of these mixed covariances (up to some rescaling). Therefore, the solution spaces of $X$ and $Y$ can be considered instead. Now, as candidates for bases of the respective dominant subspaces (independent of $t$), $V$ and $W$ are derived from the time averages of $X$ and $Y$ in step \ref{step5}. We will discuss in Section \ref{sec_dom_sub} in which cases these candidates are a good choice. Methods similar to Algorithm \ref{algo:MBIRKA} exist in the control system context, where $S$ typically is the factor of the eigenvalue decomposition of $\hat A$, so that $\tilde A$ is a diagonal matrix of eigenvalues. Such schemes are so-called Iterative Rational Krylov Algorithms  \cite{breiten_benner, linIRKA, mliopt}. In the numerical Section \ref{sec_num}, $S=I$ will be our choice for pricing Bermudan options in a reduced framework based on Algorithm \ref{algo:MBIRKA}.
\end{remark}
The role of the deviation in \eqref{almost_opt_cond} at time $T$ and the time averaged covariance error has been discussed around \eqref{averge_for_aandb}. 
In the particular case of Algorithm \ref{algo:MBIRKA}, we show that these two factors have an impact on each other.

\begin{algorithm}[!tb]
	\caption{Sylvester fixed point iteration}
	\label{algo:MBIRKA}
	\begin{algorithmic}[1]
		\Statex {\bf Input:} The system matrices: $A, X_0, C, N_i$. Covariance matrix: $K_M = (k_{ij})$.
		\Statex {\bf Output:} The reduced matrices: $\hat A, \hat X_0, \hat C, \hat N_i$.
		\State Make an initial guess for the reduced matrices $\hat A, \hat X_0,\hat C, \hat N_i$.
\While {not converged}
		\State \label{step3} Perform state space transformation with regular matrix $S = S(\hat A, \hat X_0, \hat C, \hat N_i)$:
		\Statex\quad\qquad $\tilde A = S\hat A S^{-1},~\tilde X_0 = S\hat X_0, ~\tilde C = \hat C S^{-1}, ~\tilde N_i = S\hat N_i S^{-1}.$
		\State Compute the averages $\int_0^T X(t) dt$ and $\int_0^T Y(t) dt$ of
		
		\Statex\label{step4} \quad\qquad$\dot X(t) =  A X(t) + X(t) \tilde A^\top  + \sum_{i, j=1}^{q} N_i X(t) \tilde N_j^\top k_{ij}, \quad X(0)  = X_0\tilde X_0^\top$,
		\Statex \quad\qquad$\dot Y(t) = A^\top Y(t)  +  Y(t) \tilde A  + \sum_{i, j=1}^{q} N_i^\top Y(t) \tilde N_j k_{ij}, \quad Y(0) = C^\top\tilde C$.
\State \label{step5} $V = \orth{\left(\int_0^T X(t) dt\right)}$ and $W = \orth{\left(\int_0^T Y(t) dt\right)}$, where $\orth{(\cdot)}$ returns an orthonormal basis for the image of a matrix.
		\State Determine the reduced matrices:
		\Statex \quad\qquad$\hat A = (W^\top V)^{-1}W^\top AV,\quad \hat X_0 = (W^\top V)^{-1}W^\top X_0,\quad\hat C = CV$, \quad $\hat N_i = (W^\top V)^{-1}W^\top N_i V$.
		\EndWhile
	\end{algorithmic}
\end{algorithm}
\begin{thm}\label{thm:error_opt_cond}
Let $\hat A$, $\hat N_i$, $\hat X_0$ and $\hat C$ be the reduced-order matrices computed by Algorithm~\ref{algo:MBIRKA} assuming that it converged. Then, we have \begin{align*}
&(i)\quad V\hat P(T) - \tilde P(T) = V\; \hat {\mathcal L}^{-1}\Big[\hat F(T) - (W^\top V)^{-1} W^\top \tilde F(T)\Big],  \\
&(ii)\quad W (V^\top W)^{-1}\hat Q(T) - \tilde Q(T) = W (V^\top W)^{-1}\; \hat {\mathcal L}^{-*}\Big[\hat G(T) - V^\top \tilde G(T)\Big]                                                                                                                                                                  \end{align*}
meaning that \begin{align}\label{small_average_error}
V\hat F \approx \tilde F\quad\text{and}\quad W (V^\top W)^{-1} \hat G \approx \tilde G
             \end{align}
on average w.r.t. the probability measure $\frac{dt}{T}$ on $[0, T]$ given that  
 \begin{align}\label{terminal_time_cov_error}
V\hat F(T) \approx \tilde F(T)\quad\text{and}\quad
W (V^\top W)^{-1} \hat G(T)\approx \tilde G(T).
             \end{align}
        
\end{thm}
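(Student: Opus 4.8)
The plan is to reduce the whole statement to two ingredients: an integrated form of the covariance matrix ODEs, and an intertwining relation between the mixed operator $\tilde{\mathcal L}$ and the reduced operator $\hat{\mathcal L}$ that is forced by the Petrov--Galerkin structure of Algorithm~\ref{algo:MBIRKA}. First I would integrate \eqref{matrixequalforFred} and \eqref{matrixequalforFmixed} over $[0,T]$ and use linearity of the (invertible) operators to get $\hat{\mathcal L}[\hat P(T)] = \hat F(T) - \hat X_0 \hat X_0^\top$ and $\tilde{\mathcal L}[\tilde P(T)] = \tilde F(T) - X_0 \hat X_0^\top$, i.e. $\hat P(T) = \hat{\mathcal L}^{-1}[\hat F(T) - \hat X_0\hat X_0^\top]$, with the analogous dual identities coming verbatim from \eqref{matrixequalforGred} and \eqref{matrixequalforGmixed}. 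Next I would pin down what convergence delivers: a direct substitution, using $\tilde A = S\hat A S^{-1}$, $\tilde N_i = S\hat N_i S^{-1}$, $\tilde X_0 = S\hat X_0$ and the initial data, shows that the processes in step~\ref{step4} satisfy $X(t) = \tilde F(t) S^\top$ and $Y(t) = \tilde G(t) S^{-1}$, so that $\int_0^T X = \tilde P(T) S^\top$ and $\int_0^T Y = \tilde Q(T) S^{-1}$. Since $S$ is regular, step~\ref{step5} then yields $V,W$ with $\im[V] = \im[\tilde P(T)]$ and $\im[W] = \im[\tilde Q(T)]$, whence at the fixed point $\tilde P(T) = VV^\top \tilde P(T)$ and $\tilde Q(T) = WW^\top \tilde Q(T)$ by orthonormality of the columns of $V$ and $W$.

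The core of part $(i)$ is the identity $(W^\top V)^{-1} W^\top \tilde{\mathcal L}[VM] = \hat{\mathcal L}[M]$ for every $M\in\R^{\hat n\times\hat n}$. Expanding $\tilde{\mathcal L}[VM] = AVM + VM\hat A^\top + \sum_{i,j} N_i VM \hat N_j^\top k_{ij}$ and left-multiplying by $(W^\top V)^{-1} W^\top$, each term collapses via the reduced coefficients of \eqref{red_mat_projection}, namely $(W^\top V)^{-1} W^\top A V = \hat A$, $(W^\top V)^{-1} W^\top N_i V = \hat N_i$ and $(W^\top V)^{-1} W^\top V = I$, reproducing exactly $\hat{\mathcal L}[M]$. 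Choosing $M := V^\top \tilde P(T)$, so that $VM = \tilde P(T)$, and combining with $\tilde{\mathcal L}[\tilde P(T)] = \tilde F(T) - X_0\hat X_0^\top$ and $\hat X_0 = (W^\top V)^{-1} W^\top X_0$, I obtain $V\hat{\mathcal L}^{-1}\big[(W^\top V)^{-1} W^\top \tilde F(T) - \hat X_0\hat X_0^\top\big] = \tilde P(T)$. Subtracting this from $V\hat P(T) = V\hat{\mathcal L}^{-1}[\hat F(T) - \hat X_0\hat X_0^\top]$ and invoking linearity of $\hat{\mathcal L}^{-1}$ cancels the $\hat X_0\hat X_0^\top$ terms and gives $(i)$.

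Part $(ii)$ follows the same template applied to the adjoint operators, and this is the step where I expect to have to be most careful. The dual Petrov--Galerkin projector is $W(V^\top W)^{-1} V^\top$ with the roles of $V$ and $W$ interchanged, and the relevant intertwining relation is $V^\top \tilde{\mathcal L}^*[W\tilde M] = \hat{\mathcal L}^*[(V^\top W)\tilde M]$. The subtlety is that the transposed reduced coefficients satisfy $\hat A^\top = V^\top A^\top W (V^\top W)^{-1}$ and $\hat N_i^\top = V^\top N_i^\top W (V^\top W)^{-1}$, so it is precisely the factor $(V^\top W)$ multiplying $\tilde M$ (rather than a bare $V^\top$) that makes $\hat A^\top (V^\top W) = V^\top A^\top W$ and the corresponding $N_i$ identity hold, and $(V^\top W)^{-1}V^\top W = I$ that closes the middle term. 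Taking $\tilde M := W^\top \tilde Q(T)$, using $\tilde{\mathcal L}^*[\tilde Q(T)] = \tilde G(T) - C^\top\hat C$, and noting $\hat C^\top\hat C = V^\top C^\top \hat C$ from $\hat C = CV$, the same cancellation as in $(i)$ produces $(ii)$.

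Finally, the equivalence statement is immediate from $(i)$--$(ii)$. The maps $Z\mapsto V\hat{\mathcal L}^{-1}[Z]$ and $Z\mapsto W(V^\top W)^{-1}\hat{\mathcal L}^{-*}[Z]$ are bounded and $T$-independent, and from \eqref{terminal_time_cov_error}, left-multiplying $V\hat F(T)\approx\tilde F(T)$ by $(W^\top V)^{-1} W^\top$ gives $\hat F(T)\approx (W^\top V)^{-1} W^\top \tilde F(T)$, so the right-hand bracket in $(i)$ is small; since $\tfrac1T(V\hat P(T) - \tilde P(T)) = \tfrac1T\int_0^T (V\hat F(t) - \tilde F(t))\,dt$ is exactly the $\tfrac{dt}{T}$-average of $V\hat F - \tilde F$, this yields the first relation in \eqref{small_average_error}, and the dual argument (multiply $W(V^\top W)^{-1}\hat G(T)\approx\tilde G(T)$ by $V^\top$ to get $\hat G(T)\approx V^\top\tilde G(T)$) yields the second. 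I expect the main obstacle to be the bookkeeping of the two non-symmetric projectors across the primal and dual computations — in particular getting the inner projector right, $(W^\top V)^{-1} W^\top$ in $(i)$ versus the bare $V^\top$ in $(ii)$ — rather than any single hard estimate.
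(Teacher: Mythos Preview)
Your argument is correct and follows essentially the same route as the paper: both proofs rest on the fixed-point relations $\im[V]=\im[\tilde P(T)]$, $\im[W]=\im[\tilde Q(T)]$ (obtained from $X(t)=\tilde F(t)S^\top$, $Y(t)=\tilde G(t)S^{-1}$) together with the Petrov--Galerkin compatibility between $\tilde{\mathcal L}$ and $\hat{\mathcal L}$. The only cosmetic difference is packaging: the paper introduces explicit change-of-basis matrices $M_V$, $M_W$ with $\tilde P(T)=V M_V^{-1}S^{-\top}$ and computes term by term, whereas you encode the same computation once and for all in the intertwining identity $(W^\top V)^{-1}W^\top\tilde{\mathcal L}[VM]=\hat{\mathcal L}[M]$ and its dual; since $V^\top\tilde P(T)=M_V^{-1}S^{-\top}$, the two calculations are literally the same.
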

\begin{proof}
Below, we assume that $\hat A$, $\hat N_i$, $\hat X_0$, $\hat C$ is a fixed point of  Algorithm~\ref{algo:MBIRKA}. By step \ref{step5}, there exist regular matrices $M_V$ and $M_W$ such that \begin{align}\label{rep_VW}
V = \int_0^T X(t) dt\;M_V\quad \text{and}\quad W = \int_0^T Y(t) dt\;M_W.                                                                                                                                                                                                                                                                                                                                                                                     \end{align}
We multiply \eqref{matrixequalforFmixed} with $S^\top$ (see step \ref{step3}) from the right-hand side and obtain \begin{align*}
\frac{d}{dt} [\tilde F(t) S^\top] = A[\tilde F(t)S^\top] + [\tilde F(t)S^\top]\underbrace{S^{-\top}\hat A^\top S^\top}_{=\tilde A^\top} + \sum_{i, j=1}^{q} N_i [\tilde F(t)S^\top] \underbrace{S^{-\top} \hat N_j^\top S^\top}_{=\tilde N_j^\top} k_{ij},
\end{align*}
with $\tilde F(0) S^\top=X_0 \tilde X_0^\top$. Therefore, we have $\tilde F(t) = X(t) S^{-\top}$ and hence \begin{align}\label{rep_tildeP}
\tilde P(T) =  \int_0^T X(t) dt\; S^{-\top} = V M_V^{-1} S^{-\top}                                                                                                                                                                                                               \end{align}
inserting \eqref{rep_VW} above. On the other hand, integrating the equation for $X(t)$ in step \ref{step4} and using the representation of its integral in \eqref{rep_VW} yields \begin{align}\label{first_eq}
 X(T) -  X_0\tilde X_0^\top  &=  A [V M_V^{-1}] + [V M_V^{-1}] \tilde A^\top  + \sum_{i, j=1}^{q} N_i [V M_V^{-1}]\tilde N_j^\top k_{ij}\\ \label{sec_eq}
 \Leftrightarrow \tilde F(T) -  X_0\hat X_0^\top  &=  A V [M_V^{-1} S^{-\top}] + V [ M_V^{-1}S^{-\top} ] \hat A^\top  + \sum_{i, j=1}^{q} N_i V [M_V^{-1} S^{-\top}]\hat N_j^\top k_{ij}.                                                                                                                                                                                                                                                                                                                                              \end{align}
The equivalent formulation \eqref{sec_eq} above is obtained by multiplying \eqref{first_eq} with $S^{-\top}$ from the right and by exploiting the definitions of $\tilde A$, $\tilde X_0$, $\tilde N_j$ as well as by using  $\tilde F(T) = X(T) S^{-\top}$. We multiply \eqref{sec_eq} with $(W^\top V)^{-1} W^\top$ from the left resulting in \begin{align}\label{bla1}
 (W^\top V)^{-1} W^\top \tilde F(T) -  \hat X_0\hat X_0^\top  =  \hat {\mathcal L}[M_V^{-1} S^{-\top}].                                                                                                                                                                                                                                                                                                                                                                                                                                                                                                                                                                                                                  \end{align}
We further know that \begin{align}\label{bla2}
\hat F(T) -  \hat X_0\hat X_0^\top  =  \hat {\mathcal L}[\hat P(T)]                                                                                                                                                                                                                                                                                                                                                                                                                                                                                                                                                                                                               \end{align}
by integrating \eqref{matrixequalforFred}. Subtracting \eqref{bla1} from \eqref{bla2} and applying the inverse of $\hat {\mathcal L}$, we find $\hat P(T) - M_V^{-1} S^{-\top} =  \hat {\mathcal L}^{-1}\Big[\hat F(T) - (W^\top V)^{-1} W^\top \tilde F(T)\Big]$. We multiply this equation with $V$ from the left such that (i) follows by \eqref{rep_tildeP}. \\
Let us now multiply \eqref{matrixequalforGmixed} with $S^{-1}$ from the right leading to \begin{align*}
\frac{d}{dt} [\tilde G(t) S^{-1}] = A^\top [\tilde G(t)S^{-1}] + [\tilde G(t)S^{-1}]\underbrace{S\hat A S^{-1}}_{=\tilde A} + \sum_{i, j=1}^{q} N_i^\top [\tilde G(t) S^{-1}] \underbrace{S \hat N_j S^{-1}}_{=\tilde N_j} k_{ij},
\end{align*}
with $\tilde G(0) S^{-1}=C^\top \tilde C$. This yields $\tilde G(t) = Y(t) S$. Therefore, using  \eqref{rep_VW}, we obtain \begin{align}\label{rep_tildeQ}
\tilde Q(T) =  \int_0^T Y(t) dt\; S = W M_W^{-1} S.                                                                                                                                                                                                               \end{align}
We integrate the second differential equation in step \ref{step4} and insert \eqref{rep_VW} into the resulting identity. This provides \begin{align}\label{first_eqQ}
 Y(T) -  C^\top \tilde C  =  A^\top [W M_W^{-1}] + [W M_W^{-1}] \tilde A  + \sum_{i, j=1}^{q} N_i^\top [W M_W^{-1}]\tilde N_j k_{ij}\end{align}
Now, multiplying \eqref{first_eqQ} with $S$ from the right, this is equivalent to \begin{align}
\label{sec_eqQ}
\tilde G(T) -  C^\top \hat C  =  A^\top W [M_W^{-1} S] + W [ M_W^{-1}S] \hat A  + \sum_{i, j=1}^{q} N_i^\top W [M_W^{-1} S]\hat N_j k_{ij}.                                                                                                                                                                                                                                                                                                                                              \end{align}
Multiplying \eqref{sec_eqQ} with $V^\top$ from the left and including the identity matrix $(V^\top W)^{-1} V^\top W $, we have  
\begin{align}\nonumber
V^\top\tilde G(T) -  \hat C^\top \hat C  &=  \hat A^\top [V^\top W M_W^{-1} S] + [V^\top W  M_W^{-1}S] \hat A  + \sum_{i, j=1}^{q} \hat N_i^\top [V^\top W M_W^{-1} S]\hat N_j k_{ij}\\ \label{somelabel}
&= \hat{\mathcal L}^*[V^\top W M_W^{-1} S].                                                                                                                                                                                                                                                                                                                                              \end{align}
By integrating \eqref{matrixequalforGred}, we obtain $\hat  G(T) -  \hat C^\top \hat C  = \hat{\mathcal L}^*[\hat Q(T)]$. Subtracting \eqref{somelabel} from this equation and applying $\hat{\mathcal L}^{-*}$ yields $\hat Q(T) - V^\top W M_W^{-1} S = \hat{\mathcal L}^{-*}[\hat G(T) - V^\top\tilde G(T)]$. Taking \eqref{rep_tildeQ} into account relation (ii) follows by multiplication of $W (V^\top W)^{-1}$ from the left.
\end{proof}
\begin{remark}\label{remark2}
Our strategy to derive a suitable reduced order model is to compute a fixed point of Algorithm \ref{algo:MBIRKA} for which \eqref{terminal_time_cov_error} holds true. As a consequence of Theorem \ref{thm:error_opt_cond}, we obtain \eqref{small_average_error} which determines the error in (a) and (b) of Theorem \ref{thm_opt_cond} according to \eqref{averge_for_aandb}. With \eqref{small_average_error}  and taking into account that (by the error propagation) the point-wise covariance error might be largest at $T$, we have strong indicators for meeting the assumptions of Lemma \ref{lem_cond_opt} giving us a small error in conditions (c) and (d) of Theorem \ref{thm_opt_cond}. Therefore, we expect Algorithm \ref{algo:MBIRKA} to provide reduced matrices satisfying the optimality conditions in \eqref{optcond} up to a small deviation in case the covariance error is small at $T$. This makes the associated reduced system a candidate for a good approximation of the original large-scale model. Notice that the errors in \eqref{terminal_time_cov_error} can be computed exactly since $\hat F, \tilde F, \hat G, \tilde G$ can be calculated from the vectorized versions of \eqref{matrixequalforFred}, \eqref{matrixequalforFmixed},  \eqref{matrixequalforGred}, \eqref{matrixequalforGmixed}. This requires to have $\expn^{\hat{\mathcal K} T}$, $\hat{\mathcal K}\in\mathbb R^{\hat n^2\times \hat n^2}$,  and $\expn^{\tilde{\mathcal K} T}$,  $\tilde{\mathcal K}\in\mathbb R^{(\hat n \cdot n)\times (\hat n\cdot n)}$, which are easily available as long as $\hat n\cdot n$ is not too large.
\end{remark}
\subsection{Dominant subspaces and further intuition behind the choice of $V$ and $W$}\label{sec_dom_sub}
At this point, it is still not fully clear whether it is realistic for \eqref{terminal_time_cov_error} to hold when Algorithm \ref{algo:MBIRKA} is applied, i.e., we need to discuss under which circumstances $\im[V]$ and $\im[W]$ are suitable subspaces approximating the solution spaces of $\tilde F$ and $\tilde G$. For that reason, 
we proceed with identifying when it can be expected that \eqref{terminal_time_cov_error} or, more generally, the assumptions of Lemma \ref{lem_cond_opt} are satisfied. In this context, we also answer the question concerning suitable reduced dimensions $\hat n$ ensuring a high accuracy in the approximation.
\paragraph{Solution spaces of $\tilde F$ and $\tilde G$.}
The following lemma illustrates the solution spaces for $\tilde F$ and $\tilde G$ which are the basis for detecting the dominant subspaces of these mixed covariance functions.
\begin{lem}\label{solution_space}
The columns of $\tilde F(t), \tilde P(T)$, $t\in [0, T]$ are contained in $\im[P(T)]$ and the columns of $\tilde G(t), \tilde Q(T)$, $t\in [0, T]$, lie in $\im[Q(T)]$.
\end{lem}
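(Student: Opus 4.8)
The plan is to exploit that $P(T)$ and $Q(T)$ are symmetric positive semidefinite, being time integrals of the pointwise PSD covariance matrices $F$ and $G$. For a symmetric PSD matrix the image is precisely the orthogonal complement of the kernel, so it suffices to show that every vector $v$ annihilating $P(T)$ also annihilates $\tilde F(t)$ from the left (and analogously for $Q(T)$ and $\tilde G(t)$): since $\im[\tilde F(t)]^\perp=\ker[\tilde F(t)^\top]$ and $\im[P(T)]^\perp=\ker[P(T)]$, the inclusion $\ker[P(T)]\subseteq\ker[\tilde F(t)^\top]$ is equivalent to $\im[\tilde F(t)]\subseteq\im[P(T)]$.

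First I would characterize $\ker[P(T)]$. Fix $v$ with $P(T)v=0$. Since $P(T)\geq 0$, this gives $v^\top P(T)v=\int_0^T v^\top F(t) v\,dt=0$. The integrand $v^\top F(t) v=\mathbb E[\|X_0^\top\Phi(t)^\top v\|_2^2]$ is nonnegative and, because $F$ solves the linear matrix ODE \eqref{matrixdgl}, continuous in $t$; hence it vanishes identically on $[0,T]$. Therefore $X_0^\top\Phi(t)^\top v=0$ $\mathbb P$-a.s., i.e. $v^\top\Phi(t) X_0=0$ almost surely, for every $t\in[0,T]$.

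The key step is then to pull this almost-sure identity through the expectation defining the mixed covariance. Because $\tilde F(t)=\mathbb E[\Phi(t) X_0\hat X_0^\top\hat\Phi(t)^\top]$ couples the original and reduced fundamental solutions inside a single expectation, one cannot simply factor out a deterministic matrix; but the pathwise relation $v^\top\Phi(t) X_0=0$ is all that is required, yielding
\[
v^\top\tilde F(t)=\mathbb E\big[(v^\top\Phi(t) X_0)\,\hat X_0^\top\hat\Phi(t)^\top\big]=0 .
\]
Thus $\ker[P(T)]\subseteq\ker[\tilde F(t)^\top]$, equivalently $\im[\tilde F(t)]\subseteq\im[P(T)]$ for all $t\in[0,T]$. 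Integrating over $[0,T]$ and using that $\im[P(T)]$ is a linear subspace (hence closed under integration of its members) gives $\im[\tilde P(T)]\subseteq\im[P(T)]$ as well.

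Finally, the statements for $\tilde G$ and $\tilde Q$ follow by the identical argument applied to the dual system \eqref{stochstate_dual}: from $Q(T)v=0$ one obtains $v^\top\Phi_d(t) C^\top=0$ a.s. via $v^\top G(t) v=\mathbb E[\|C\Phi_d(t)^\top v\|_2^2]$, and inserting this into $\tilde G(t)=\mathbb E[\Phi_d(t) C^\top\hat C\hat\Phi_d(t)^\top]$ gives $v^\top\tilde G(t)=0$, whence $\im[\tilde G(t)]\subseteq\im[Q(T)]$ and $\im[\tilde Q(T)]\subseteq\im[Q(T)]$. The only delicate point is the passage from the vanishing of the scalar time integral to the pathwise vanishing of $X_0^\top\Phi(t)^\top v$; this is exactly where positive semidefiniteness of $F$ together with its continuity in $t$ is essential, and it is the hinge on which the whole argument turns.
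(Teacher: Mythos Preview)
Your proof is correct and follows essentially the same approach as the paper's own proof: both pass from $P(T)v=0$ to the vanishing of the nonnegative continuous integrand $v^\top F(t)v=\mathbb E\|X_0^\top\Phi(t)^\top v\|_2^2$, deduce the pathwise identity $v^\top\Phi(t)X_0=0$, insert it into the expectation defining $\tilde F(t)$, and conclude via $\im[P(T)]=\ker[P(T)]^\perp$. The dual case is handled analogously in both.
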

\begin{proof}
Suppose that $z_T\in \kernel[P(T)]$. Then, we have \begin{align}\label{Fequalszero}                                                                                                                                                                                                                                                        \int_0^T z_T^\top F(t) z_T \;dt = z_T^\top P(T) z_T = 0.                                                                                                                                                                                                                                                                                                                                                                                                                                                                                                                                                                                                                                     \end{align}
By the stochastic representation $F(t)= \mathbb E \left[\Phi(t) X_0 X_0^\top \Phi(t)^\top\right]$, we know that $z_T^\top F(t) z_T \geq 0$. In addition, $t \mapsto F(t)$ is continuous because it is the solution to 
\eqref{matrixdgl}. Therefore, \eqref{Fequalszero} implies $0= z_T^\top F(t) z_T = \mathbb E \left[z_T^\top \Phi(t) X_0 X_0^\top \Phi(t)^\top z_T\right] = \mathbb E \|\left(\Phi(t) X_0\right)^\top z_T \|_2^2$ for all $t\in [0, T]$. This yields \begin{align}\label{fund_orth}                                                                                                                                                                                                                                        z_T^\top \Phi(t) X_0 = 0, \quad \mathbb P-\text{a.s. and for all } t\in [0, T].                                                                                                                                                                                                                                            \end{align}
As a consequence of \eqref{fund_orth}, we obtain $z_T^\top \tilde F(t) = \mathbb E \left[z_T^\top \Phi(t) X_0 \hat X_0^\top \hat \Phi(t)^\top\right] = 0$ for all $t$ and hence also $z_T^\top \tilde P(T) = \int_0^T z_T^\top \tilde F(t) dt = 0$. This means that the columns of $\tilde F(t)$ and $\tilde P(T)$ are orthogonal to $\kernel[P(T)]$. Since $P(T)$ is symmetric, we have $\im[P(T)]=\left(\kernel[P(T)]\right)^\perp$ such that the first part of the claim follows. The second part of the proof is omitted since it uses completely analogue arguments.
\end{proof}
\paragraph{Dominant subspaces of $\tilde F$, $\tilde G$ and algebraic criteria for a low-dimensional underlying structure.}
We now identify the spaces in which $\tilde F$ and $\tilde G$ can be well approximated. We consider the following diagonalizations of $P(T)$ and $Q(T)$: 
\begin{subequations}\label{balancing_PQ}
\begin{align}\label{diagP}
P(T) &= \mathcal T_P\Sigma_P(T)\mathcal T_P^\top,\\ \label{diagQ}
Q(T) &= \mathcal T_Q \Sigma_Q(T) \mathcal T_Q^{\top},
\end{align}   
\end{subequations}
where $\Sigma_\ell(T)  =\diag(\sigma_{\ell, 1}(T),\ldots,\sigma_{\ell, n}(T))$ is a diagonal and $\mathcal T_\ell$ is a regular matrix for $\ell\in\{P, Q\}$. $\Sigma_\ell(T)$ can be the matrix of eigenvalues of $P$ and $Q$, respectively. Alternatively, we can find a transformation with $\Sigma_P(T)= \Sigma_Q(T)$. Given $P(T), Q(T)>0$, there exists a regular matrix $\mathcal T$ such that $ \mathcal T_P=\mathcal T^{-1}$, 
$\mathcal T_Q=\mathcal T^{\top}$ and $\Sigma_\ell(T)  =\diag(\sigma_{1}(T),\ldots,\sigma_{n}(T))$ (for both $\ell\in\{P, Q\}$), see \cite{mor_heston} for more details, where the Hankel singular values (HSVs) $\sigma_{i}(T)$ are the square roots of the eigenvalues of $P(T) Q(T)$. The latter transformation is called balancing and can be seen as a simultaneous diagonalization of $P(T)$ and $Q(T)$. This is the basis for the MOR scheme in \cite{mor_heston} that is related to balanced truncation, a MOR technique that was introduced for deterministic linear control systems in \cite{moore}.\smallskip

The decompositions in \eqref{balancing_PQ} provide a weighted basis of $\im[P]$ and $\im[Q]$ represented by  $\mathcal T_P$ and $\mathcal T_Q$, respectively. By Lemma \ref{solution_space}, the dominant subspace of $\tilde F$ and $\tilde G$ can therefore be identified which are the eigenspaces corresponding to the large eigenvalues of $P(T), Q(T)$ or the subspaces associated to the large HSVs. This will be further emphasized in Theorem \ref{thm_rel_irka_BT} below. In this context,
let us partition \begin{align}\label{defineVW}
	\mathcal T_Q= \begin{pmatrix}
			W_{Q} &  \mathcal T_{Q, 2}
		\end{pmatrix}, \quad
\mathcal T_P= \begin{pmatrix}
			V_{P} &  \mathcal T_{P, 2} 
		\end{pmatrix}, \quad\Sigma_\ell(T) = \begin{pmatrix}
        \Sigma_{\ell, 1}(T)& \\
        & \Sigma_{\ell, 2}(T)
       \end{pmatrix},                                                                                                                                                        \end{align}
where $\Sigma_{\ell, 2}(T)=\diag(\sigma_{\ell, \hat n+1}(T),\ldots,\sigma_{\ell, n}(T))$, $\ell\in\{P, Q\}$, is the matrix of small eigenvalues/HSVs and $V_{P}$, $W_{Q}\in \mathbb R^{n\times \hat n}$. In the following theorem, it can be seen that the images of $V$ and $W$ resulting from Algorithm \ref{algo:MBIRKA} are a good approximation for the dominant subspaces of $\tilde F$ and $\tilde G$.
\begin{thm}\label{thm_rel_irka_BT}
Let $V, W$ be the matrices obtained by Algorithm \ref{algo:MBIRKA} given that it converged and let $V_{P}, W_{Q}$ be defined as in \eqref{defineVW}. Then, there exist $M_{\tilde F}(t)\in \mathbb R^{\hat n\times\hat n}, M_{\tilde F, 2}(t)\in \mathbb R^{(n-\hat n)\times\hat n}$ ($t\in [0, T]$) and $M_{\tilde P}(T) \in \mathbb R^{\hat n\times\hat n}, M_{\tilde P, 2}(T)\in \mathbb R^{(n-\hat n)\times\hat n}$ such that \begin{equation}\label{relV}
\begin{aligned}
 \tilde F(t) &=      V_{P} M_{\tilde F}(t) +  \mathcal T_{P, 2} \Sigma_{P, 2}(T) M_{\tilde F, 2}(t), \\ V &=      V_{P} M_{\tilde P}(T) +  \mathcal T_{P, 2} \Sigma_{P, 2}(T) M_{\tilde P, 2}(T).
 \end{aligned}
 \end{equation}
 Moreover, we find $M_{\tilde G}(t)\in \mathbb R^{\hat n\times\hat n}, M_{\tilde G, 2}(t)\in \mathbb R^{(n-\hat n)\times\hat n}$ and $M_{\tilde Q}(T)\in \mathbb R^{\hat n\times\hat n}, M_{\tilde Q, 2}(T)\in \mathbb R^{(n-\hat n)\times\hat n}$ giving us \begin{equation}\label{relW}\begin{aligned}
 \tilde G(t) &=      W_{Q} M_{\tilde G}(t) +  \mathcal T_{Q, 2} \Sigma_{Q, 2}(T) M_{\tilde G, 2}(t), \\ W &=      W_{Q} M_{\tilde Q}(T) +  \mathcal T_{Q, 2} \Sigma_{Q, 2}(T) M_{\tilde Q, 2}(T).
 \end{aligned}
 \end{equation}
\end{thm}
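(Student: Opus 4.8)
The plan is to express both the mixed covariances and the projection matrices in the eigen-/balancing basis supplied by the columns of $\mathcal T_P$ (resp. $\mathcal T_Q$) and to read off the claimed block structure from the fact, established in Lemma \ref{solution_space}, that the relevant columns all live in $\im[P(T)]$ (resp. $\im[Q(T)]$).

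First I would record the two image inclusions needed. By Lemma \ref{solution_space} the columns of $\tilde F(t)$ and of $\tilde P(T)$ lie in $\im[P(T)]$ for every $t\in[0,T]$. For the projection matrix $V$ I would invoke the fixed-point identities from the proof of Theorem \ref{thm:error_opt_cond}: relation \eqref{rep_tildeP} gives $\tilde P(T)=\int_0^T X(t)\,dt\,S^{-\top}$, hence $\int_0^T X(t)\,dt=\tilde P(T)S^{\top}$, and since $V=\orth(\int_0^T X(t)\,dt)$ with $S$ regular we get $\im[V]=\im[\tilde P(T)]\subseteq\im[P(T)]$. Thus both $\tilde F(t)$ and $V$ have all columns in $\im[P(T)]$.

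The key step is then to rewrite $\im[P(T)]$ through the partition \eqref{defineVW}. Because $\mathcal T_P$ is regular, $\mathcal T_P^{\top}$ is surjective, so from \eqref{diagP} one has $\im[P(T)]=\im[\mathcal T_P\Sigma_P(T)]$. Multiplying out the block form gives $\mathcal T_P\Sigma_P(T)=\begin{pmatrix} V_P\Sigma_{P,1}(T) & \mathcal T_{P,2}\Sigma_{P,2}(T)\end{pmatrix}$, whence $\im[P(T)]\subseteq\im[V_P]+\im[\mathcal T_{P,2}\Sigma_{P,2}(T)]$. Consequently every column of $\tilde F(t)$ (resp. of $V$) splits as a $V_P$-part plus a $\mathcal T_{P,2}\Sigma_{P,2}(T)$-part; collecting the coefficient vectors column-by-column into matrices produces $M_{\tilde F}(t),M_{\tilde F,2}(t)$ (resp. $M_{\tilde P}(T),M_{\tilde P,2}(T)$) and proves \eqref{relV}. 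I would stress that no invertibility of $\Sigma_{P,1}(T)$ is required: the factor $\Sigma_{P,1}(T)$ is simply absorbed into $M_{\tilde F}$, while the decisive small factor $\Sigma_{P,2}(T)$ is deliberately kept explicit so that the tail term is visibly controlled by the small eigenvalues/HSVs. Statement \eqref{relW} then follows by the verbatim dual argument: Lemma \ref{solution_space} places the columns of $\tilde G(t)$ and $\tilde Q(T)$ in $\im[Q(T)]$, identity \eqref{rep_tildeQ} gives $\int_0^T Y(t)\,dt=\tilde Q(T)S^{-1}$ so that $\im[W]=\im[\tilde Q(T)]\subseteq\im[Q(T)]$, and the factorization $\im[Q(T)]=\im[\mathcal T_Q\Sigma_Q(T)]\subseteq\im[W_Q]+\im[\mathcal T_{Q,2}\Sigma_{Q,2}(T)]$ coming from \eqref{diagQ} and \eqref{defineVW} furnishes $M_{\tilde G}(t),M_{\tilde G,2}(t),M_{\tilde Q}(T),M_{\tilde Q,2}(T)$.

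The main obstacle is essentially bookkeeping rather than analysis: correctly linking the orthonormalized $V,W$ back to $\tilde P(T),\tilde Q(T)$ via the fixed-point relations so that the inclusions $\im[V]\subseteq\im[P(T)]$ and $\im[W]\subseteq\im[Q(T)]$ are justified, and handling the block factorization of the image carefully so that it remains valid regardless of whether $\Sigma_{P,1}(T)$ or $\Sigma_{Q,1}(T)$ happen to be singular. Once these inclusions and the image identity $\im[P(T)]=\im[\mathcal T_P\Sigma_P(T)]$ are in place, the existence of the coefficient matrices is immediate.
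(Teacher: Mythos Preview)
Your proposal is correct and follows essentially the same route as the paper: both arguments combine Lemma~\ref{solution_space} (to place the columns of $\tilde F(t)$, $\tilde P(T)$, $\tilde G(t)$, $\tilde Q(T)$ in $\im[P(T)]$ resp.\ $\im[Q(T)]$), the fixed-point identities \eqref{rep_tildeP}, \eqref{rep_tildeQ} (to pull $V$ and $W$ into those same images), and the block form of the diagonalizations \eqref{diagP}, \eqref{diagQ} to split off the $\Sigma_{\ell,2}(T)$-weighted tail. The paper merely writes these steps out more explicitly, e.g.\ by setting $\tilde F(t)=P(T)Z_{\tilde F}(t)$ and expanding $P(T)=V_P\Sigma_{P,1}(T)V_P^\top+\mathcal T_{P,2}\Sigma_{P,2}(T)\mathcal T_{P,2}^\top$, which yields concrete formulas for the coefficient matrices you assert abstractly.
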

\begin{proof}
By Lemma \ref{solution_space}, there are matrices $Z_{\tilde F}(t), Z_{\tilde P}(T)\in \mathbb R^{n\times\hat n}$ such that $\tilde F(t) = P(T) Z_{\tilde F}(t)$ and $\tilde P(T) = P(T) Z_{\tilde P}(T)$. Exploiting the representation of $P(T)$ in  \eqref{diagP} and the partition of $\mathcal T_P$ in \eqref{defineVW} then leads to \begin{align*}                                                                                                                                                                                                \tilde F(t) =  \mathcal T_P \Sigma_P(T) \mathcal T_P^{\top} Z_{\tilde F}(t) =  V_{P} \Sigma_{P, 1}(T) V_{P}^{\top} Z_{\tilde F}(t) +  \mathcal T_{P, 2} \Sigma_{P, 2}(T) \mathcal T_{P, 2}^{\top} Z_{\tilde F}(t).                                                                                                                                                                                                    \end{align*}
The same way, we obtain
\begin{align*}                                                                                                                                                                                                         V= \tilde P(T) S^{\top} M_V =  V_{P} \Sigma_{P, 1}(T) V_{P}^{\top} Z_{\tilde P}(T)S^{\top} M_V +  \mathcal T_{P, 2} \Sigma_{P, 2}(T) \mathcal T_{P, 2}^{\top} Z_{\tilde P}(T) S^{\top} M_V
\end{align*}
based on \eqref{rep_tildeP}. This provides \eqref{relV}. In order to show \eqref{relW}, we exploit Lemma \ref{solution_space}. Therefore, we have $\tilde G(t) = Q(T) Z_{\tilde G}(t)$ and $\tilde Q(T) = Q(T) Z_{\tilde Q}(T)$ for some $Z_{\tilde G}(t), Z_{\tilde Q}(T)\in \mathbb R^{n\times\hat n}$. We insert \eqref{diagQ} into these equations and partition $\mathcal T_Q$ as in \eqref{defineVW} yielding \begin{align*}                                                                                                                                                                                                \tilde G(t) =  \mathcal T_Q \Sigma_Q(T) \mathcal T_Q^\top Z_{\tilde G}(t) =  W_{Q} \Sigma_{Q, 1}(T) W_{Q}^{\top} Z_{\tilde G}(t) +  \mathcal T_{Q, 2} \Sigma_{Q, 2}(T) \mathcal T_{Q, 2}^{\top} Z_{\tilde G}(t).                                                                                                                                                                                                   \end{align*}
Similarly, using \eqref{rep_tildeQ}, it holds that \begin{align*}                                                                                                                                                                                                         W= \tilde Q(T) S^{-1} M_W =  W_{Q} \Sigma_{Q, 1}(T) W_{Q}^{\top} Z_{\tilde Q}(T)S^{-1} M_W +  \mathcal T_{Q, 2} \Sigma_{Q, 2}(T) \mathcal T_{Q, 2}^{\top} Z_{\tilde Q}(T) S^{-1} M_W.
\end{align*}
This concludes the proof.
\end{proof}
Theorem \ref{thm_rel_irka_BT} tells us that the eigenvalues of $P(T), Q(T)$ or the HSVs $\sigma_{1}(T),\ldots,\sigma_{n}(T)$ determine whether the dominant subspaces of the system and its dual are low-dimensional. If we find a small $\hat n$ such that the diagonal entries of $\Sigma_{P, 2}(T)$ (matrix of truncated eigenvalues of $P(T)$ or HSVs) are small, then \eqref{relV} shows that $\tilde F$ approximately takes values in the low-dimensional subspace $\im[V_{P}]$. On the other hand, \eqref{relV} also tells that the columns of $V$ represent an orthonormal basis  that, up to small perturbations depending on $\Sigma_{P, 2}(T)$, lives in $\im[V_{P}]$. For that reason, we know that the columns of $V$ approximately are a basis for $\im[V_{P}]$ ($\im[V_{P}]$ and $\im[V]$ are close in some sense). Relations \eqref{relW} give us the same information about $\tilde G$. If the neglected eigenvalues of $Q(T)$ or the HSVs $\sigma_{\hat n+1}(T),\ldots,\sigma_{n}(T)$ are small for a small $\hat n$, we can approximate $\tilde G$ well in $\im[W_{Q}]$ which itself can be accurately represented by the low-dimensional space $\im[W]$.\smallskip

Summing up these arguments, given a low-dimensional underlying structure of the problem (characterized by a large number of small eigenvalues/HSVs), we find good approximations of $\tilde F$ and $\tilde G$ in low-dimensional subspaces $\im[V]$ and $\im[W]$, respectively. Knowing that $V \hat F$ and $W(V^\top W)^{-1}\hat G$ are projection based estimates of $\tilde F$ and $\tilde G$, respectively, in these spaces, we can expect \eqref{terminal_time_cov_error} and the assumptions of Lemma \ref{lem_cond_opt} to hold in case $\sigma_{\ell, \hat n+1}(T),\ldots,\sigma_{\ell, n}(T)\approx 0 $ for $\ell\in\{P, Q\}$ or $\sigma_{\hat n+1}(T),\ldots,\sigma_{n}(T)\approx 0$.
\begin{remark}
 With the truncated eigenvalues of $P(T)$ and $Q(T)$ or the associated HSVs strong algebraic criteria for the error in the optimality conditions in Theorem \ref{thm_opt_cond} are found. However, this requires to solve for the integrals of the solutions of the potentially very high-dimensional matrix differential equations \eqref{matrixdgl} and \eqref{matrixequalforG}. 
 This is can be very expensive or infeasible for large $n$. Therefore, the criterion of checking for the error in \eqref{almost_opt_cond} at time $T$, see Remark \ref{remark2}, is more accessible as it is related to the lower dimensional equations \eqref{matrixequalforFred}, \eqref{matrixequalforFmixed}, \eqref{matrixequalforGred} and \eqref{matrixequalforGmixed}.
\end{remark}

\section{Reduced order modelling given asymptotic stability}\label{mor_stab_sys}

Asset price models often involve dividends $\delta>0$ such as in the classical example of Andersen and Broadie \cite{andersonbroadie}, where \eqref{stochstate} is given in the component-wise form $dx_i(t) =(\mathbf{\mathrm{r}} - \delta) x_i(t)dt+ \xi_i x_i(t) dM_i(t)$, $i\in\{1, \dots, n\}$. Like in \cite{andersonbroadie}, we have $\mathbb E\left\|x_i(t)\right\|_2^2\rightarrow 0$, as $t\rightarrow \infty$, regardless of the initial state given that $\delta$ is sufficiently large, i.e., it holds that $2 (\mathbf{\mathrm{r}} - \delta)+ \xi_i^2 k_{ii}<0$ for all $i\in\{1, \dots, n\}$.
In this section, we discuss a modification of Algorithm \ref{algo:MBIRKA} if such a stability condition holds true. In fact, we present the asymptotic behavior of Algorithm \ref{algo:MBIRKA} in terms of the optimality conditions \eqref{optcond} for the bound in Lemma \ref{lemma_error_measure} as $T\rightarrow \infty$. In order to ensure existence of the limit in \eqref{error_measure_bound} as $T\rightarrow \infty$, the above mentioned mean square asymptotic stability of \eqref{stochstate} is required which is generally defined by the property that 
 $\mathbb E\left\|\Phi(t)\right\|^2 \lesssim \expn^{-c t}$ for some constant $c>0$. We refer to \cite{staboriginal} or \cite{redmannspa2} for more details on this condition and associated equivalent algebraic formulations. With this assumption, the covariance function $F$ decays to zero exponentially as $T\rightarrow \infty$. If \eqref{red_stochstate} is mean square stable as well, the same holds true for the reduced and the mixed covariance functions $\hat F$ and $\tilde F$. Therefore, $P:=\lim_{T\rightarrow \infty} P(T)$, $\hat P:=\lim_{T\rightarrow \infty} \hat P(T)$ and $\tilde P:=\lim_{T\rightarrow \infty} \tilde P(T)$ exist in \eqref{error_measure_bound}. In addition, mean square asymptotic stability of \eqref{stochstate} is equivalent to mean square asymptotic stability of the dual equation \eqref{stochstate_dual}. This also guarantees the existence of 
 $Q:=\lim_{T\rightarrow \infty} Q(T)$, $\hat Q:=\lim_{T\rightarrow \infty} \hat Q(T)$ and $\tilde Q:=\lim_{T\rightarrow \infty} \tilde Q(T)$. Let us recall that the dual integrated covariance functions $Q(T), \hat Q(T), \tilde Q(T)$ were introduced at the end of Section \ref{blabla}. Now, the right-hand side of  \eqref{error_measure_bound} can be replaced by its limit which represents a more conservative bound, i.e., we obtain  
 \begin{align}\label{Limit_error_measure_bound}
\mathbb E\int_0^T \left\|y(t)-\hat y(t) \right\|_2^2dt \leq \left(\trace(C P C^\top)-2 \trace(C \tilde P \hat C^\top)+\trace(\hat C \hat P \hat C^\top) \right)\left\|z_0\right\|_2^2. \end{align}
Necessary optimality conditions for this limit bound can be proved much easier than for \eqref{error_measure_bound} due to several vanishing terms but, in fact,  such conditions for an expression similar to the right-hand side of \eqref{Limit_error_measure_bound} have already been obtained in the context of stochastic control system \cite{mliopt}. We take this result and formulate it in a theorem below. Moreover, we prove that these optimality conditions are consistent with the ones of Theorem \ref{thm_opt_cond}.
\begin{thm}\label{limit_thm_opt_cond}
Given that \eqref{original_system} and the reduced model \eqref{red_system} are  mean square asymptotically stable and suppose that the matrices $\hat A, \hat N_i, \hat X_0, \hat C$ of reduced system are locally optimal with respect to the bound in \eqref{Limit_error_measure_bound}. Then, it holds that 
\begin{equation}\label{optcond_limit}
\begin{aligned}
           (a)\quad&\hat C \hat P = C \tilde P,\quad (b)\quad \hat Q\hat X_0  =  \tilde Q^\top X_0, \quad
           (c)\quad  \hat Q\hat P  =\tilde Q^\top \tilde P, \\
           (d)\quad&\hat Q \left(\sum_{j=1}^{q}\hat N_j k_{ij} \right) \hat P =   \tilde Q^\top \left(\sum_{j=1}^{q} N_j k_{ij}\right) \tilde P
\end{aligned}
\end{equation}
for $i=1,\dots, q$. Moreover, taking the limit as $T\rightarrow \infty$ in \eqref{optcond} results in \eqref{optcond_limit}.
\end{thm}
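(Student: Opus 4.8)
The plan is to treat the two assertions of the theorem separately. For the optimality conditions \eqref{optcond_limit}, I would first record that mean square asymptotic stability of \eqref{original_system} and \eqref{red_system} forces every covariance function to decay exponentially, so that $F(T),\hat F(T),\tilde F(T)$ and $G(T),\hat G(T),\tilde G(T)$ all vanish as $T\to\infty$. Integrating the defining matrix differential equations \eqref{matrixdgl}, \eqref{matrixequalforFred}, \eqref{matrixequalforFmixed} and \eqref{matrixequalforG}--\eqref{matrixequalforGmixed} over $[0,\infty)$ and using these vanishing terminal values identifies the stationary covariances as the unique solutions of the algebraic equations $\mathcal L(P)=-X_0X_0^\top$, $\hat{\mathcal L}(\hat P)=-\hat X_0\hat X_0^\top$, $\tilde{\mathcal L}(\tilde P)=-X_0\hat X_0^\top$ together with the adjoint equations $\mathcal L^*(Q)=-C^\top C$, $\hat{\mathcal L}^*(\hat Q)=-\hat C^\top\hat C$, $\tilde{\mathcal L}^*(\tilde Q)=-C^\top\hat C$. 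Through these algebraic constraints the limit bound \eqref{Limit_error_measure_bound} becomes a smooth function of $(\hat A,\hat N_i,\hat X_0,\hat C)$, and conditions (a)--(d) then follow from the gradient computation performed for the analogous control-system functional in \cite{mliopt}: one couples the bound to the Lyapunov/Sylvester constraints through adjoint variables, eliminates them, and sets the partial derivative with respect to each reduced coefficient to zero. This is a direct transcription of the cited argument and is much shorter than the proof of Theorem \ref{thm_opt_cond}, since here every time-averaged convolution has collapsed into a plain product of stationary covariances.

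For the consistency assertion I would pass to the limit $T\to\infty$ directly in \eqref{optcond}. Conditions (a) and (b) are immediate from $\hat P(T)\to\hat P$, $\tilde P(T)\to\tilde P$, $\hat Q(T)\to\hat Q$ and $\tilde Q(T)\to\tilde Q$. The substance lies in the convolution integrals (c) and (d). Splitting $\hat Q(T-t)=\hat Q-\big(\hat Q-\hat Q(T-t)\big)$ gives
\[
\int_0^T \hat Q(T-t)\hat F(t)\,dt=\hat Q\,\hat P(T)-\int_0^T\big(\hat Q-\hat Q(T-t)\big)\hat F(t)\,dt ,
\]
so it suffices to show that the correction term vanishes. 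I would bound $\big\|\hat Q-\hat Q(T-t)\big\|=\big\|\int_{T-t}^{\infty}\hat G(u)\,du\big\|\lesssim\expn^{-c(T-t)}$ from the exponential decay of $\hat G$, and $\|\hat F(t)\|\lesssim\expn^{-\tilde c t}$; the resulting majorant $\int_0^T\expn^{-c(T-t)}\expn^{-\tilde c t}\,dt$ is a convolution of two exponentials and tends to zero as $T\to\infty$. Hence the left-hand side of (c) converges to $\hat Q\hat P$, and the same computation applied to $\tilde Q(T-t)^\top\tilde F(t)$, using the exponential decay of the mixed covariances $\tilde G$ and $\tilde F$, yields $\tilde Q^\top\tilde P$ on the right. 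Condition (d) is obtained verbatim after inserting the constant matrices $\sum_{j=1}^{q}\hat N_j k_{ij}$ and $\sum_{j=1}^{q}N_j k_{ij}$, which do not affect the convergence. Combining the two limits reproduces \eqref{optcond_limit}.

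The step I expect to be the main obstacle is the uniform control of the convolution tails in (c) and (d): without exponential decay of the mixed covariances $\tilde F$ and $\tilde G$ one cannot interchange the limit with the integral, and it is precisely the mean square asymptotic stability of both \eqref{original_system} and \eqref{red_system} --- which, as noted in the text, propagates to $\tilde F$ and $\tilde G$ --- that supplies the needed bounds. Once the algebraic Lyapunov characterizations of the stationary covariances are in place, everything else is routine bookkeeping.
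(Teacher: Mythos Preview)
Your proposal is correct and follows essentially the same route as the paper: both cite \cite{mliopt} for the optimality conditions of the limit bound, take limits directly in (a) and (b), and for (c)--(d) split $\hat Q(T-t)=\hat Q-\text{(correction)}$, bound the correction by exponential decay, and show the resulting convolution of exponentials vanishes. The only cosmetic difference is that the paper expresses the correction as $\hat G(T-t,\hat Q)$ (the solution of \eqref{matrixequalforGred} with initial value $\hat Q$, obtained via the vectorized identity $\vect(\hat Q(T))=(I-\expn^{\hat{\mathcal K}^\top T})\vect(\hat Q)$), whereas you write it as the tail integral $\int_{T-t}^\infty \hat G(u)\,du$; these are the same object, and your formulation is arguably the more transparent of the two.
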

\begin{proof}
The conditions in \eqref{optcond_limit} directly follow from \cite[Theorem 2.4]{mliopt}, where stochastic control systems are investigated. Given mean square asymptotic stability, taking the limit in (a) and (b) in \eqref{optcond}, we immediately obtain relations (a) and (b) in this theorem. For (c) and (d), different representations for $\tilde Q(T)$ and $\hat Q(T)$ need to be found. First of all, by \eqref{matrixequalforGred}, we have $\vect(\hat G(t)) = \expn^{\hat {\mathcal K}^\top t} \vect(\hat C^\top \hat C)$. Integrating this equation over $[0, T]$ yields $\vect(\hat Q(T)) = \big(\expn^{\hat {\mathcal K}^\top T}- I\big)\hat {\mathcal K}^{-\top} \vect(\hat C^\top \hat C)$. By the exponential decay of $\hat G$, it holds that $\expn^{\hat {\mathcal K}^\top T}\rightarrow 0$ for $T\rightarrow \infty$. Therefore, we have  $\vect(\hat Q) = - \hat {\mathcal K}^{-\top} \vect(\hat C^\top\hat C)$, so that 
$\vect(\hat Q(T)) = \big(I-\expn^{\hat {\mathcal K}^\top T}\big)\vect(\hat Q)$. Devectorizing this equation leads to $\hat Q(T) = \hat Q - \hat G(T, \hat Q)$, where $\hat G(\cdot, \hat Q)$ is the solution to \eqref{matrixequalforGred} with initial state $\hat Q$. Exploiting 
\eqref{matrixequalforGmixed} and using the exactly same steps, we obtain  $\tilde Q(T) = \tilde Q - \tilde G(T, \tilde Q)$ with $\tilde G(\cdot, \tilde Q)$ solving \eqref{matrixequalforGmixed} having the initial value $\tilde Q$. Using these new representations of $\hat Q(T)$ and $\tilde Q(T)$, (c) in \eqref{optcond} becomes
\begin{align}\nonumber
&\int_0^T   \bigg(\hat Q - \hat G(T-t, \hat Q)\bigg)\hat F(t) dt  =\int_0^T  \bigg(\tilde Q - \tilde G(T-t, \tilde Q)\bigg)^\top \tilde F(t) dt\\ \label{cond_c_equiv}
& \Leftrightarrow
\hat Q \hat P(T)-\int_0^T \hat G(T-t, \hat Q)\hat F(t) dt  =\tilde Q \tilde P(T)- \int_0^T  \tilde G(T-t, \tilde Q)^\top \tilde F(t) dt.
\end{align}
By the mean square asymptotic stability of the of the (dual) full and the (dual) reduced system, we find a constant $c>0$, so that $\left\|\tilde G(T-t, \tilde  Q)\right\|, \left\|\hat G(T-t, \hat Q)\right\|\lesssim \expn^{-c(T-t)}$ and $\left\|\tilde F(t)\right\|, \left\|\hat F(t)\right\|\lesssim \expn^{-ct}$. Consequently, taking the limit of $T\rightarrow \infty$ in \eqref{cond_c_equiv}, we obtain (c) in \eqref{optcond_limit}. The arguments for (d) are analogue which concludes the proof. 
\end{proof}
The conditions in \eqref{optcond_limit} are closely related to optimality conditions for certain error measures corresponding to deterministic bilinear control systems \cite{morZhaL02}. In particular, the results of \cite{morZhaL02} can be viewed as a special case of Theorem \ref{limit_thm_opt_cond}. We now prove that a modification of Algorithm \ref{algo:MBIRKA} satisfies the optimality conditions in \eqref{optcond_limit}. This can actually be done by the techniques used in 
\cite{breiten_benner}(bilinear control systems) or in \cite{mliopt} (stochastic control systems). However, we simply exploit the steps of the proof of Theorem \ref{thm:error_opt_cond} since they immediately yield the following result.
\begin{thm}\label{somethm}
Suppose that the inputs of Algorithm \ref{algo:MBIRKA} are asymptotically mean square stable systems \eqref{original_system} and \eqref{red_system}. Then, replacing $T$ by $\infty$ in steps \ref{step4} and \ref{step5} of Algorithm \ref{algo:MBIRKA}  and assuming that this algorithm converges, the matrices  $\hat A$, $\hat N_i$, $\hat X_0$ and $\hat C$ define a reduced system satisfying \begin{align}\label{perfect_average_fit}
 V\hat P   = \tilde P\quad \text{and} \quad W (V^\top W)^{-1}\hat Q = \tilde Q.                                                                                                                                                                 \end{align}
As a consequence, we obtain that the coefficients of this reduced system fulfill the optimality conditions in Theorem \ref{limit_thm_opt_cond}.
\end{thm}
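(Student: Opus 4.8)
The plan is to re-run the argument in the proof of Theorem~\ref{thm:error_opt_cond} essentially verbatim, but with every occurrence of the horizon $T$ replaced by $\infty$, and then to read off the conditions of Theorem~\ref{limit_thm_opt_cond} from the resulting identities. Since the inputs are mean square asymptotically stable, all the infinite-horizon integrals $P,\hat P,\tilde P,Q,\hat Q,\tilde Q$ exist and the covariance functions decay exponentially, so that $\hat F(T),\tilde F(T),\hat G(T),\tilde G(T)\to 0$ as $T\to\infty$. At a fixed point of the modified algorithm we still have $V=\int_0^\infty X(t)\,dt\,M_V$ and $W=\int_0^\infty Y(t)\,dt\,M_W$ for regular $M_V,M_W$ as in \eqref{rep_VW}, and the pointwise identities $\tilde F(t)=X(t)S^{-\top}$, $\tilde G(t)=Y(t)S$ used in that proof are unaffected by the choice of horizon, giving $\tilde P=VM_V^{-1}S^{-\top}$ and $\tilde Q=WM_W^{-1}S$ exactly as in \eqref{rep_tildeP} and \eqref{rep_tildeQ}.

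The key simplification is that integrating the $X$- and $Y$-equations of step~\ref{step4} over $[0,\infty)$ produces only $-X_0\tilde X_0^\top$ and $-C^\top\tilde C$ on the left, because $X(\infty)=Y(\infty)=0$ by stability. Carrying out exactly the manipulations leading to \eqref{bla1} and \eqref{somelabel}, the ``terminal'' contributions $\hat F(T)$, $(W^\top V)^{-1}W^\top\tilde F(T)$, $\hat G(T)$ and $V^\top\tilde G(T)$ that appeared in identities (i) and (ii) of Theorem~\ref{thm:error_opt_cond} all vanish in the limit. Applying $\hat{\mathcal L}^{-1}$ and $\hat{\mathcal L}^{-*}$ then yields $\hat P=M_V^{-1}S^{-\top}$ and $\hat Q=V^\top W M_W^{-1}S$, and multiplication by $V$ and by $W(V^\top W)^{-1}$ from the left gives the exact relations $V\hat P=\tilde P$ and $W(V^\top W)^{-1}\hat Q=\tilde Q$, i.e.\ \eqref{perfect_average_fit}. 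The point is that what was only an approximate fit in the finite-horizon case becomes an equality here, precisely because of the exponential decay.

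It then remains to substitute \eqref{perfect_average_fit} into \eqref{optcond_limit}, using the coefficient definitions \eqref{red_mat_projection} and the symmetry of $\hat P$ and $\hat Q$. Condition (a) is immediate from $\hat C=CV$ and $V\hat P=\tilde P$. For the other three I would transpose the second relation in \eqref{perfect_average_fit} to obtain $\tilde Q^\top=\hat Q(W^\top V)^{-1}W^\top$; then (b) follows from $\hat X_0=(W^\top V)^{-1}W^\top X_0$, while (c) and (d) follow by inserting $\tilde P=V\hat P$ and using the cancellation $(W^\top V)^{-1}(W^\top V)=I$ together with $\hat N_j=(W^\top V)^{-1}W^\top N_j V$. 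I do not expect a genuine obstacle here: the only points requiring care are confirming that the fixed-point representation \eqref{rep_VW} remains valid with $T=\infty$ (guaranteed by convergence of the integrals under stability) and tracking the transposes and the non-symmetry of $\tilde P,\tilde Q$ correctly when verifying (b)--(d).
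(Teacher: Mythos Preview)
Your proposal is correct and follows essentially the same approach as the paper: set $T=\infty$ in the proof of Theorem~\ref{thm:error_opt_cond}, use stability to make the terminal covariance terms vanish so that \eqref{bla1}, \eqref{bla2} and \eqref{somelabel} collapse to the same algebraic identity, deduce $\hat P=M_V^{-1}S^{-\top}$ and $\hat Q=V^\top W M_W^{-1}S$, and then verify \eqref{optcond_limit} by inserting \eqref{perfect_average_fit} and the projection formulas \eqref{red_mat_projection}. The only minor addition in your write-up is the explicit mention of the symmetry of $\hat Q$ when transposing the second relation, which the paper uses implicitly.
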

\begin{proof}
Based on the underlying stability, we can set $T=\infty$ in the proof of Theorem \ref{thm:error_opt_cond}. Using the same notation like in that proof, we then obtain $\tilde P = V M_V^{-1} S^{-\top}$ as in \eqref{rep_tildeP}. Since all covariance functions tend to zero as $T\rightarrow \infty$, \eqref{bla1} and \eqref{bla2} become $
 -  \hat X_0\hat X_0^\top  =  \hat {\mathcal L}[M_V^{-1} S^{-\top}]$ and
$-\hat X_0\hat X_0^\top  =  \hat {\mathcal L}[\hat P] $, respectively. Consequently, we have $\hat P = M_V^{-1} S^{-\top}$ and hence $V \hat P = \tilde P$. Moreover, we have $
\tilde Q = W M_W^{-1} S$ from \eqref{rep_tildeQ} again by setting $T=\infty$ in each step before.
The disappearing covariances lead to
$-  \hat C^\top \hat C  =  \hat{\mathcal L}^*[V^\top W M_W^{-1} S]$ instead of \eqref{somelabel} which is also the identify for $\hat Q$. Consequently, we obtain  $\hat Q=V^\top W M_W^{-1} S$. Multiplying this equation with $W (V^\top W)^{-1}$ from the left, we get  $W (V^\top W)^{-1}\hat Q = \tilde Q$ yielding the first part of the claim.  Now, using \eqref{perfect_average_fit}, we obtain $C \tilde P =  C V\hat P = \hat C \hat P$, $\tilde Q^\top X_0=\hat Q(W^\top V)^{-1} W^\top X_0 = \hat Q\hat X_0$, $\tilde Q^\top \tilde P= \hat Q(W^\top V)^{-1} W^\top V\hat P= \hat Q \hat P$ and $\tilde Q^\top \left(\sum_{j=1}^{q} N_j k_{ij}\right) \tilde P = \hat Q(W^\top V)^{-1} W^\top\left(\sum_{j=1}^{q} N_j k_{ij}\right) V \hat P = \hat Q\left(\sum_{j=1}^{q} \hat N_j k_{ij}\right)\hat P$. These are the identities in \eqref{optcond_limit} closing the proof.                                                                                                                                                                                                                                                                                                                        
\end{proof}
The above result might also not be too surprising following the discussion of Section \ref{section3}, where we noticed that the covariance error \eqref{terminal_time_cov_error} at $T$ determines the error in the optimality conditions. In this section, $T$ is  basically replaced by $\infty$, where the covariance error vanishes due to the underlying stability leading to a perfect fit of the optimality conditions. 
\begin{remark}
The change in Algorithm \ref{algo:MBIRKA} proposed in Theorem \ref{somethm} generally leads a worse approximation because a larger bound \eqref{Limit_error_measure_bound} is minimized in comparison to \eqref{error_measure_bound}. However, computational complexity is lower since the required $\mathcal X_\infty:=\int_0^\infty X(t) dt$ and $\mathcal Y_\infty:=\int_0^\infty Y(t) dt$ in step \ref{step4} of the modified scheme are solutions to \begin{align*}                                                                                                                                                                                                                                                    
-X_0\tilde X_0^\top &=  A \mathcal X_\infty + \mathcal X_\infty \tilde A^\top  + \sum_{i, j=1}^{q} N_i \mathcal X_\infty \tilde N_j^\top k_{ij},\\
-C^\top\tilde C&= A^\top \mathcal Y_\infty  +  \mathcal Y_\infty \tilde A  + \sum_{i, j=1}^{q} N_i^\top \mathcal Y_\infty \tilde N_j k_{ij}.                                                                                                                                                                                                                                                                                                                                                                                                                                                      \end{align*}
These matrix equations can be solved if $\hat n \cdot n$ is very large, see, e.g., \cite{damm_matrixeq}. This is in contrast to the original Algorithm \ref{algo:MBIRKA}, where the left-hand sides of the above equations additionally involve $X(T)$ and $Y(T)$, respectively. These terminal values need to be calculated from vectorizations like in \eqref{matrixequalforFmixed} and \eqref{matrixequalforGmixed}. This is not feasible given that $\hat n \cdot n>10^4$.
\end{remark}
In the following section, the benefit of our dimension reduction methods in the context of pricing Bermudan options is presented.

\section{Applying Algorithm \ref{algo:MBIRKA} to price Bermudan options}  \label{sec_num}
In this section, we make use of the dimension reduction schemes, investigated in this paper, in the context of option pricing. In particular, we construct a reduced asset price model by Algorithm \ref{algo:MBIRKA} with $S=I$ that (approximately) satisfies the optimality conditions of Theorems \ref{thm_opt_cond} with the expectation of having the reduced system close to the original model in the $L^2$-sense using inequality \eqref{error_measure_bound}. Based on this $L^2$-approximation, we aim for  \eqref{approx_stop} giving an accurate estimate for a Bermudan option price.

\subsection{Basket call options}\label{sec_basket} 

Suppose that $M=B$ is a Wiener process with covariance matrix $K_B$.
For $i= 1, \ldots, 50$, we study the following Black-Scholes model 
\begin{align}\label{scholesnum2}
             dx_i(t) =(\mathbf{\mathrm{r}} - \delta) x_i(t)dt+ \xi_i x_i(t) dB_i(t),\quad x_i(0)=x_{0, i},\quad t\in [0, T],
            \end{align}
with interest rate $\mathbf{\mathrm{r}} = 0.02$, dividends $\delta = 0.07$ and terminal time $T=1$. Moreover, we set $X_0=x_0$ and $ z_0=1$. The volatilities $\xi_i$ are sampled from a uniform distribution on $[0.1, 0.3]$ and we pick the initial states $x_{0, i}$ randomly from the interval $[0.1, 1.4]$. 
For this example, we assume that each component $B_i$ of $B$ is a standard Wiener process and additionally the associated covariance matrix $K_B$ contains both small and large correlations. We can see from Figure \ref{valuesKw} that the entries of $K_B$ are between $0.1$ and $1$ and that the ratio of large and small correlations is roughly the same. \smallskip

The quantity of interest is a basket with equal weights, i.e., \begin{align}   \label{basket}                                                        y(t) = \sum_{i=1}^{50} x_i(t).                                                                            \end{align}
Consequently, the matrix $C$ is a row vector of ones. By Lemma \ref{lem_cond_opt}, we know that our scheme needs to find matrices $V$ and $W$ such that $\im[V]$ and $\im[W]$ represent the dominant subspaces of the mixed covariance functions $\tilde F$ and $\tilde G$ in order to have a small error in \eqref{optcond}. If the truncated HSVs $\sigma_{\hat n+1}(T), \dots, \sigma_n(T)$ ($\sigma_i(T)$ are the square roots of the eigenvalues of $P(T) Q(T)$) are small, such a scheme is given by Algorithm \ref{algo:MBIRKA}, see Theorem \ref{thm_rel_irka_BT}. These values are depicted in Figure \ref{hsvplot}. There, we can see that there is only a single large HSV dominating the other ones indicating that a scalar reduced system can capture large parts of the dynamics of original asset price model. Based on the HSVs, we further expect a very good approximation by \eqref{red_system} choosing $\hat n\geq 2$. Whether  Algorithm \ref{algo:MBIRKA} is almost optimal in terms of conditions  \eqref{optcond} can also be checked by the corresponding covariance error \eqref{almost_opt_cond} at the terminal time $T$, see also Remark \ref{remark2}.  This error is displayed in Table \ref{table0}. We observe that the dominant subspaces of $\tilde F(T)$ and $\tilde G(T)$ are captured well by the projection matrices $V$ and $W$ indicating a small error in \eqref{optcond}. The covariance errors in Table \ref{table0}  indeed give a good intuition for the order of the $L^2$-error. Looking at the first column of Table \ref{table1}, a small relative $L^2$-error is given for all reduced order dimensions considered there. We also see that the error is decreasing in $\hat n$. The second column Table \ref{table1} provides the error of applying the model reduction scheme proposed in \cite{mor_heston} to our problem. We observe that both schemes perform equally well. However, our method has the advantage of being  generally applicable in higher dimensions as it is computationally less expensive than the scheme used in \cite{mor_heston}. Let us further point out that the norm of the basket here is  $\left\|y\right\|_{L^2_T}:= \sqrt{\int_0^T \mathbb E \left\|y(t)\right\|^2 dt} \approx 28$ such that the absolute $L^2$-error is by that factor larger than the relative one.
\begin{figure}[ht]
\begin{minipage}{0.48\linewidth}
 \hspace{-0.5cm}
\includegraphics[width=1.1\textwidth,height=6cm]{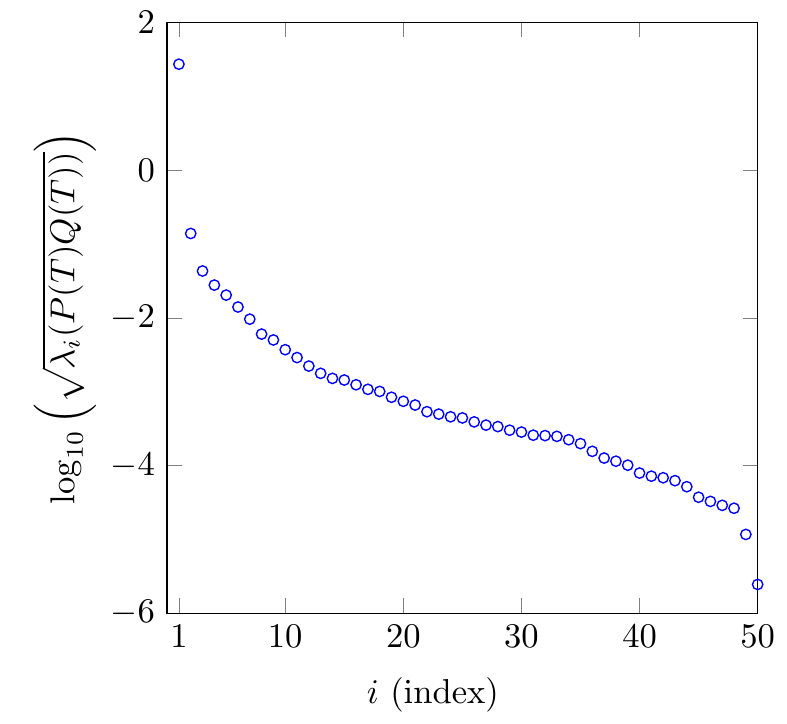}
\caption{Logarithmic HSVs of \eqref{scholesnum2} with associated basket in \eqref{basket}.}\label{hsvplot}
\end{minipage}\hspace{0.5cm}
\begin{minipage}{0.48\linewidth}
 \hspace{-0.5cm}
\includegraphics[width=1.1\textwidth,height=6cm]{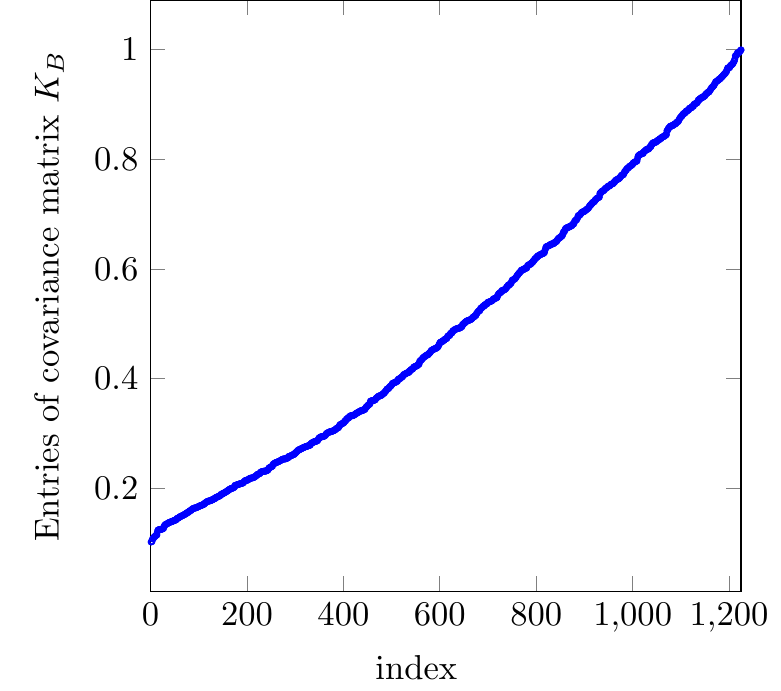}
\caption{Ordered entries $k_{ij}$, $i<j$, of the covariance matrix $K_B$ for the basket call option.}\label{valuesKw}
\end{minipage}
\end{figure}
\begin{table}
\begin{minipage}{.98\linewidth}
\centering\begin{tabular}{|c|c|c|}\hline
& \multicolumn{2}{c}{Algorithm \ref{algo:MBIRKA}} \vline \\
\hline 
$\hat n$  & $\textcolor{white}{\Big\|}\|V\hat F(T) - \tilde F(T)\|/\|\tilde F(T)\|$ & $\|W (V^\top W)^{-1} \hat G(T) - \tilde G(T)\|/\|\tilde G(T)\|$\\
\hline
\hline
$1$ & $2.62$e$-03$& $2.94$e$-03$ \\   
$2$ &$2.60$e$-03$  &$2.92$e$-03$ \\
$3$ &$2.60$e$-03$  & $2.93$e$-03$ \\
$4$ &$1.53$e$-03$ & $1.82$e$-03$\\
$5$ &$6.83$e$-04$ &$8.72$e$-04$  \\
\hline
\end{tabular}\caption{The (dual) covariance error \eqref{terminal_time_cov_error} of Algorithm \ref{algo:MBIRKA} with $S=I$ at time $T$.}
\label{table0}
\end{minipage}
\begin{minipage}{.98\linewidth}
\centering\begin{tabular}{|c|c|c|}\hline
& \multicolumn{2}{c}{$\left\|y-\hat y\right\|_{L^2_T} /\left\|y\right\|_{L^2_T}$} \vline \\
\hline 
$\hat n$  & Algorithm \ref{algo:MBIRKA} & Method from \cite{mor_heston}\\
\hline
\hline
$1$ & $4.52$e$-03$& $4.51$e$-03$\\   
$2$ &$1.75$e$-03$  &$1.75$e$-03$ \\
$3$ &$1.19$e$-03$  &$1.18$e$-03$\\
$4$ &$8.43$e$-04$ & $8.45$e$-04$\\
$5$ &$5.88$e$-04$ &$5.89$e$-04$\\
\hline
\end{tabular}\caption{Relative $L^2$- error between the basket \eqref{basket} and the output $\hat y$ associated to the reduced system based on Algorithm \ref{algo:MBIRKA} with $S=I$ and the method studied in \cite{mor_heston}.}
\label{table1}
\end{minipage}
\end{table}
\smallskip

Below, we consider a Bermudan option associated to the asset price model \eqref{scholesnum2} with basket \eqref{basket}. In this context, we introduce the discounted payoff function \begin{align*}
f_t(y(t)) = \expn^{-\mathbf{\mathrm{r}} t} \max\left\{y(t)-\kappa, 0\right\}                 
                \end{align*}
with strike price $\kappa = y(0)$ describing the gain of the basket option when exercising it at time $t$. Further,  we assume to have a discrete set of dates $\mathcal{J} = \left\{0, 0.25, 0.5, 0.75, 1\right\}$, where the option can be exercised.
%
As sketched in Section \ref{intro2}, we want to approximate $u$ by $\hat u$ in this context, where
\begin{equation}
  \label{eq:optimal-stopping-value}
  u := \sup_{\tau \in \mathcal{S}_0} \mathbb{E}[f_\tau(y(\tau))]\quad\text{and}\quad \hat{u} := \sup_{\tau \in \hat{\mathcal{S}}_0} \mathbb{E}[f_\tau(\hat{y}(\tau))]
\end{equation}
with $\mathcal{S}_0$ and $\hat{\mathcal{S}}_0$ denoting the set of all stopping times w.r.t.~the
filtration generated by the price processes $x$ and $\hat x$, respectively. Since $\hat u$ is the optimal value of a stopping problem that is different from the original one, $\hat u$ generally is no lower bound of $u$. However, we find a bound for the error between $u$ and $\hat u$ if we formally replace $\mathcal{S}_0$ and $\hat{\mathcal{S}}_0$ by the class of stopping times w.r.t.~the
filtration generated by the noise process $B$ in \eqref{eq:optimal-stopping-value} taking into account that there is no gain of the enlargement of $\mathcal{S}_0$ and $\hat{\mathcal{S}}_0$. Now, having defined both problems in \eqref{eq:optimal-stopping-value} on the same set of stopping times, we can use the result in \cite{mor_heston}. Therefore, we have  \begin{equation}\label{eq:reduced-pathwise-L1}
    \vert u - \hat{u}\vert \leq \mathbb{E}\Big[ \sup_{t \in \mathcal{J}} \left\vert f(y(t)) - f(\hat{y}(t))\right\vert
    \Big].
  \end{equation}
In the following,  the fair price $\hat u$ of the Bermudan option in the reduced setting is computed exploiting  the algorithm of Longstaff and Schwartz \cite{LS2001}. This method computes the continuation functions based on regression, i.e., it involves solving problems of the form \eqref{eq:least-squares-regression}. In this context, the basis functions $\psi_{k}$ ($k=1\dots, \mathfrak K-1$) are chosen to be polynomials of absolute order of at most $4$. Moreover, the payoff function is added to the basis as well. Therefore, the total number of basis elements is $\mathfrak K = \frac{(4+\hat n)!}{4! \,\hat n!}+1$. Additionally, we use $\mathfrak{M} = 10^6$ samples within the regression procedure. Tables \ref{table_bermudan} and \ref{table_bermudan3} show the Bermudan option prices in the reduced setting with $\hat n = 1, 2, 3, 4, 5$ using Algorithm \ref{algo:MBIRKA} or the method of \cite{mor_heston}, respectively. As for the $L^2$-error, the performance is pretty good for both schemes. Based on the corresponding error bound, we know that the price for $\hat n=5$ is very close to the actual one. On the other hand, comparing the results for the dimensions $\hat n = 1$ and $\hat n=5$, the gain is very little taking into account that the standard deviation of the estimator is of order $10^{-3}$.
\begin{table}[th]
\begin{minipage}{.95\linewidth}
\centering
\begin{tabular}{|c|c|c|}\hline
$\hat n$  & Value $\hat u$ of Bermudan option reduced system & $\mathbb{E}\left[ \sup_{t \in \mathcal{J}} \vert f(y(t)) - f(\hat{y}(t))\vert\right]$\\
\hline
\hline
$1$ & $0.99157$& $0.090439$\\
$2$ & $0.99356$ & $0.036384$\\
$3$ & $0.99436$ & $0.024370$\\
$4$ & $0.99358$ & $0.017471$ \\
$5$ & $0.99494$ & $0.012379$ \\
\hline
\end{tabular}\caption{Fair price of the Bermudan basket call option in reduced system based on Algorithm \ref{algo:MBIRKA} with $S=I$ and $K_B$ with entries as in Figure \ref{valuesKw}.}
\label{table_bermudan}
\end{minipage}
\begin{minipage}{.95\linewidth}
\centering
\begin{tabular}{|c|c|c|}\hline
$\hat n$  & Value $\hat u$ of Bermudan option reduced system & $\mathbb{E}\left[ \sup_{t \in \mathcal{J}} \vert f(y(t)) - f(\hat{y}(t))\vert\right]$\\
\hline
\hline
$1$ & $0.99193$& $0.090443$\\
$2$ & $0.99469$ & $0.036351$\\
$3$ & $0.99328$ & $0.024341$\\
$4$ & $0.99433$ & $0.017504$ \\
$5$ & $0.99500$ & $0.012365$ \\
\hline
\end{tabular}\caption{Fair price of the Bermudan basket call option in the reduced system based on the method studied in \cite{mor_heston} and $K_B$ with entries as in Figure \ref{valuesKw}.}
\label{table_bermudan3}
\end{minipage}
\end{table}

\subsection{Max call Bermudan options}

Asset price models with  baskets \eqref{basket} can be significantly reduced as such baskets represent low-dimensional partial information of the underlying high-dimensional problem. Generally, if the number $p$ of outputs (dimension of $y$) is small and if there are many high correlations between the different noise processes, then the reduction potential is very high. This is an observation that was also made in \cite{Bermudan_POD,mor_heston}, where dimension reduction techniques have been applied to Bermudan options. At least one of these essential factors has to be satisfied in order to be able to achieve an accurate reduced system, in which regression is feasible. If a large $p$ is combined with uncorrelated noise, then we might still be able to lower the complexity but $\hat n\leq 10$ will barely be possible (of course this also depends on the original $n$).\smallskip

Now, when aiming to approximate the value of a max call option, the quantity of interest \begin{align*}
   \mathbf {y_{max}}(t) = \max\{x_1(t), x_2(t),\dots, x_n(t)\}                                                                                                                                                                               \end{align*}
is still a functional of the vector of asset prices. However, it is a highly nonlinear one not immediately fitting the setting in \eqref{original_system}. To solve this problem, we set $y(t) = x(t)$ (or $C=I$) with the goal of approximating the full state by $\hat y(t) = V\hat x(t)$. Given a high accuracy, we consequently have \begin{align*}
   \mathbf {y_{max}}(t) \approx \mathbf {\hat y_{max}}(t):=  \max\{\hat y_1(t), \hat y_2(t),\dots, \hat y_n(t)\}                                                                                                                                                                                                                                                                                                                                                                                                                                                                                                                                                                                                      \end{align*}
with $\hat y_i$ being the components of the above $\hat y$. However, this approach involves a large number of outputs as we have $p=n$. Therefore, the same reduction performance like in the basket case of Section \ref{sec_basket} can only be achieved changing the noise profile. In particular, we add more high correlations. The entries of the new $K_B$ are depicted in Figure \ref{valuesKw_max}. Moreover, in order to roughly have the same option price as in the previous section, we also modify the initial states. We pick them from a uniform distribution on the interval $[5, 6]$. The Bermudan option prices for the reduced model using Algorithm \ref{algo:MBIRKA} are stated in Table \ref{table_bermudan2}. According to the error bounds the approximation is very accurate for $\hat n=6$. In the full state approximation case, we see a significant difference between the chosen reduced dimensions $\hat n$ in terms of the performance. In fact, $\hat n=1, 2$ yield a rather poor estimate. This can be explained by the high order of the associated truncated HSVs depicted in Table \ref{hsvplot_max}. Since the fourth HSV is relatively small, an acceptable performance can be seen from reduced systems with $\hat n\geq 3$. A good approximation is   achieved by fixing $\hat n\geq 5$ since the HSVs decay rapidly at this point.
\begin{figure}[ht]
\begin{minipage}{0.48\linewidth}
 \hspace{-0.5cm}
\includegraphics[width=1.1\textwidth,height=6cm]{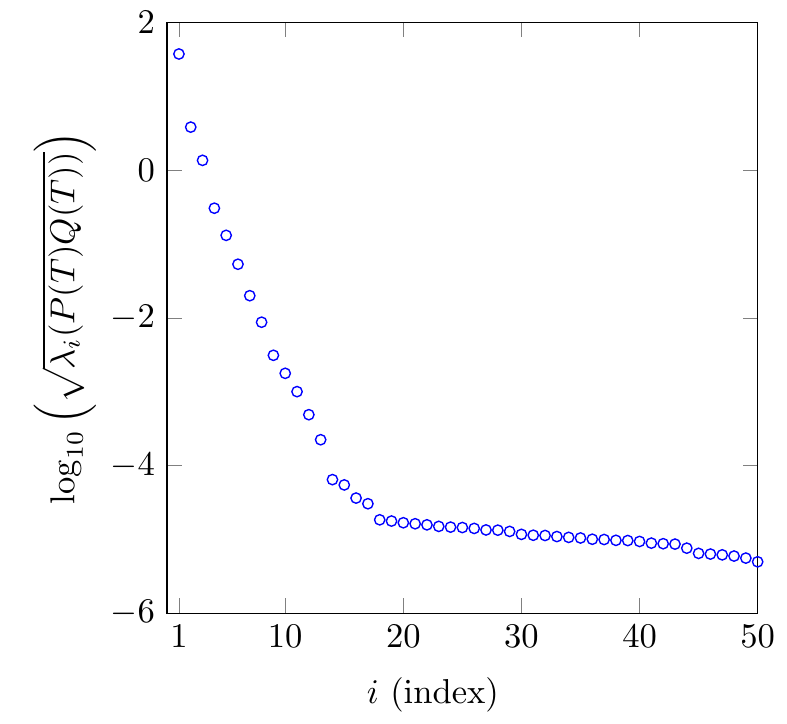}
\caption{Logarithmic HSVs of \eqref{scholesnum2} approximating the full state $x$ ($C=I$).}\label{hsvplot_max}
\end{minipage}\hspace{0.5cm}
\begin{minipage}{0.48\linewidth}
 \hspace{-0.5cm}
\includegraphics[width=1.1\textwidth,height=6cm]{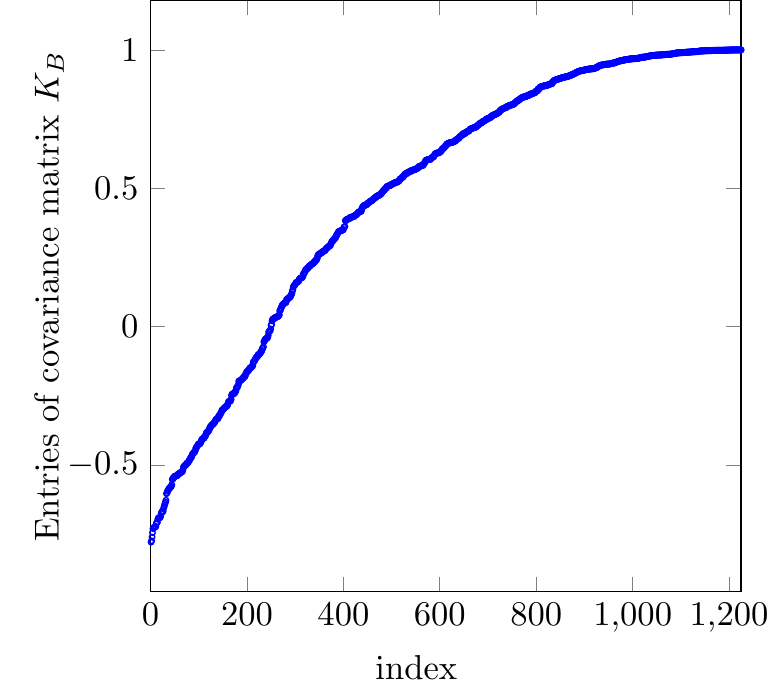}
\caption{Ordered entries $k_{ij}$, $i<j$, of the covariance matrix $K_B$ for the max call option.}\label{valuesKw_max}
\end{minipage}
\end{figure}
\begin{table}
\centering
\begin{tabular}{|c|c|c|}\hline
$\hat n$  & Value $\hat u$ of Bermudan option reduced system & $\mathbb{E}\left[ \sup_{t \in \mathcal{J}} \vert f(\mathbf {y_{max}}(t)) - f(\mathbf {\hat y_{max}}(t))\vert\right]$\\
\hline
\hline
$1$ & $0.20255$& $0.98777$\\
$2$ & $0.72525$ & $0.28064$\\
$3$ & $0.83729$ & $0.099617$\\
$4$ & $0.86855$ & $0.039208$ \\
$5$ & $0.88562$ & $0.015911$ \\
$6$ & $0.89057$ & $0.0050141$ \\
\hline
\end{tabular}\caption{Fair price of the Bermudan max call option in the reduced system based on Algorithm \ref{algo:MBIRKA} with $C=I$, $S=I$ and $K_B$ having entries as in Figure \ref{valuesKw_max}.}
\label{table_bermudan2}
\end{table}

\appendix

\section{Proof of Theorem \ref{thm_opt_cond}}\label{app_pending_proof}
Below, the pending proof of Theorem \ref{thm_opt_cond} follows:
\begin{proof}
If the coefficients of system \eqref{red_system} are locally minimal  with respect to the right-hand side of \eqref{error_measure_bound}, then we have $\partial_z \mathcal E = 0 $ with $\mathcal{E}$ being defined in \eqref{calE}. This is true if and only if, we have
\begin{align}\label{ifoptthen}
 \partial_z\langle {\hat C}^\top \hat C, \hat P(T)\rangle_F =\partial_z 2 \langle  C^\top {\hat C}, \tilde P(T)\rangle_F,                                   
\end{align}
where $z\in  \{\hat a_{k\ell}, \hat n^{(l)}_{k\ell}, \hat x_{k\ell}, \hat c_{k \ell}\}$ represents the entries of $\hat A = (\hat a_{k\ell})$, $\hat N_l = (\hat n^{(l)}_{k\ell})$, $\hat X_0 = (\hat x_{k\ell})$, $\hat C = (\hat c_{k \ell})$, $l=1, \dots, q$ and $k, \ell$ are generic indices. In the following, 
$e_k$ denotes the $k$th unit vector of suitable dimension. Setting $z = \hat c_{k\ell}$, relation \eqref{ifoptthen} becomes 
\begin{align*}
&\langle e_\ell e_{k}^\top\hat C + \hat C^\top   e_k e_\ell^\top, \hat P(T)\rangle_F =2 \langle  C^\top e_k e_\ell^\top, \tilde P(T)\rangle_F\\
&\Leftrightarrow \trace([e_\ell e_{k}^\top\hat C + \hat C^\top   e_k e_\ell^\top] \hat P(T))=  2 \trace(e_\ell e_{k}^\top C \tilde P(T) ).                                                                                                    
\end{align*}
Exploiting the properties of the trace and the symmetry of $\hat P$, this is equivalent to 
\begin{equation*}
e_{k}^\top{\hat C} \hat P(T) e_\ell=e_{k}^\top C \tilde P(T) e_\ell
\end{equation*}
for all $k= 1,\ldots p$ and $\ell= 1, \ldots, \hat n$. This yields identity (a).
Let us now focus on the cases $z = \hat a_{k\ell}, \hat n^{(l)}_{k\ell}, \hat x_{k\ell}$. Let us first introduce 
 $\tilde Q$ and $\hat Q$ as the solutions to \begin{align}\label{infinite_gram}
\hat {\mathcal L}^*[\hat Q]  =  - \hat C^\top \hat C, \quad                                                                                                                                                                                                                                                                                                                                            
  \tilde {\mathcal L}^*[\tilde Q] =-C^\top \hat C.                                                                                                                                                                                                                                                                                                                                                 \end{align}
Like in \eqref{matrixequalforGred} and \eqref{matrixequalforGmixed}, we can exploit that $\hat {\mathcal L}^*$ and $\tilde {\mathcal L}^*$ have the Kronecker matrix representations $\hat {\mathcal K}^\top$ and $\tilde {\mathcal K}^\top$, so that \eqref{infinite_gram} is equivalent to the vectorized version $\hat {\mathcal K}^\top \vect[\hat Q]  =  - \vect[\hat C^\top \hat C]$ and $                                                                                                                                                                                                                                                                                                                                           
  \tilde {\mathcal K}^\top\vect[\tilde Q] =-\vect[C^\top \hat C]$. 
Now, \eqref{ifoptthen} can be rewritten in the following equivalent ways
\begin{align}\nonumber
&\left\langle {\hat C}^\top \hat C, \partial_z \hat P(T)\right\rangle_F = 2 \left\langle  C^\top {\hat C}, \partial_z \tilde P(T)\right\rangle_F\Leftrightarrow \left\langle \hat {\mathcal L}^*[\hat Q], \partial_z \hat P(T) \right\rangle_F = 2 \left\langle  \tilde {\mathcal L}^*[\tilde Q], \partial_z \tilde P(T)\right\rangle_F\\
&\Leftrightarrow \left\langle \hat Q, \hat {\mathcal L}[\partial_z \hat P(T)] \right\rangle_F = 2 \left\langle  \tilde Q, \tilde {\mathcal L}[\partial_z \tilde P(T)]\right\rangle_F \label{frobenius_formulation}
\end{align}
using \eqref{infinite_gram}and the properties of adjoint operators. We integrate \eqref{matrixequalforFred} and \eqref{matrixequalforFmixed} with respect to time and apply $\partial_z$ to both sides of the resulting equations. This yields 
\begin{align}\label{rel1}
&\partial_z[\hat F(t)] - \partial_z[\hat X_0 \hat X_0^\top] =\partial_z \left[\hat {\mathcal L}[\hat P(t)]\right] = \hat {\mathcal L}[\partial_z \hat P(t)] + (\partial_z \hat {\mathcal L})[\hat P(t)],\\ \label{rel2}
&\partial_z[\tilde F(t)] - \partial_z[X_0 \hat X_0^\top] =\partial_z \left[\tilde {\mathcal L}[\tilde P(t)]\right] = \tilde {\mathcal L}[\partial_z \tilde P(t)] + (\partial_z \tilde {\mathcal L})[\tilde P(t)]
    \end{align}
for $t\in [0, T]$ using the product rule. We insert \eqref{rel1} and \eqref{rel2} for $t=T$ 
into \eqref{frobenius_formulation} and obtain
\begin{align}\label{rel3}
 \left\langle \hat Q, \partial_z[\hat F(T)] - \partial_z[\hat X_0 \hat X_0^\top]- (\partial_z \hat {\mathcal L})[\hat P(T)] \right\rangle_F = 2 \left\langle  \tilde Q, \partial_z[\tilde F(T)] - \partial_z[X_0 \hat X_0^\top]-(\partial_z \tilde {\mathcal L})[\tilde P(T)]\right\rangle_F.
\end{align}
Below, we determine a representation for $\partial_z[\hat F(t)]$ and $\partial_z[\tilde F(t)]$. Therefore, we apply the time derivative to both \eqref{rel1} and \eqref{rel2} providing
\begin{equation}\label{rel4}
\begin{aligned}
&\frac{d}{dt}\partial_z[\hat F(t)]  = \hat {\mathcal L}\Big[\partial_z [\hat F(t)]\Big] + (\partial_z \hat {\mathcal L})[\hat F(t)],\quad \partial_z[\hat F(0)] = \partial_z[\hat X_0 \hat X_0^\top], \\ 
&\frac{d}{dt}\partial_z[\tilde F(t)]  = \tilde {\mathcal L}\Big[\partial_z [\tilde F(t)]\Big] + (\partial_z \tilde {\mathcal L})[\tilde F(t)],\quad \partial_z[\tilde F(0)] = \partial_z[X_0 \hat X_0^\top].
    \end{aligned}
\end{equation}
We now fix $z=  \hat x_{k\ell}$ and observe that $(\partial_{\hat x_{k\ell}} \hat {\mathcal L}), (\partial_{\hat x_{k\ell}} \tilde {\mathcal L}) = 0$ since  $\hat {\mathcal L}$ and $\tilde {\mathcal L}$ do not depend on $\hat X_0$. In this case, the vectorized forms  of \eqref{rel4} are analogue to the ones of \eqref{matrixequalforFred} and \eqref{matrixequalforFmixed}. The solution representations of these vectorizations and the relation $\left\langle A_1, A_2 \right\rangle_F= \vect(A_1)^\top \vect(A_2)$ for two generic matrices $A_1, A_2$ of suitable dimension are exploited below. Consequently, \eqref{rel3} for $z=  \hat x_{k\ell}$ reads as follows \begin{align*}
 &\left\langle \hat Q, \partial_z[\hat F(T)] - \partial_z[\hat X_0 \hat X_0^\top] \right\rangle_F = 2 \left\langle  \tilde Q, \partial_z[\tilde F(T)] - \partial_z[X_0 \hat X_0^\top]\right\rangle_F\\
 &\Leftrightarrow (\vect(\hat Q))^\top \vect\left(\partial_z[\hat F(T)] - \partial_z[\hat X_0 \hat X_0^\top]\right)= 2 (\vect(\tilde Q))^\top \vect\left(\partial_z[\tilde F(T)] - \partial_z[ X_0 \hat X_0^\top]\right)\\
 &\Leftrightarrow (\vect(\hat Q))^\top \left(\expn^{\hat{\mathcal K}T} - I\right)\vect\left(\partial_z[\hat X_0 \hat X_0^\top]\right)= 2 (\vect(\tilde Q))^\top \left(\expn^{\tilde{\mathcal K}T} - I\right)\vect\left(\partial_z[X_0 \hat X_0^\top]\right)\\
 &\Leftrightarrow \Big(\hat {\mathcal K}^{-\top} \vect[\hat C^\top \hat C] \Big)^\top \left(\expn^{\hat{\mathcal K}T} - I\right)\vect\left(\partial_z[\hat X_0 \hat X_0^\top]\right)
 = 2 \Big(\tilde {\mathcal K}^{-\top} \vect[C^\top \hat C]\Big)^\top \left(\expn^{\tilde{\mathcal K}T} - I\right)\vect\left(\partial_z[X_0 \hat X_0^\top]\right)\\
 &\Leftrightarrow \left((\expn^{\hat{\mathcal K}^\top T} - I) \hat{\mathcal K}^{-\top}\vect(\hat C^\top \hat C)\right)^\top \vect\left(\partial_z[\hat X_0 \hat X_0^\top]\right) = 2 \left((\expn^{\tilde{\mathcal K}^\top T} - I) \tilde{\mathcal K}^{-\top}\vect(C^\top \hat C)\right)^\top \vect\left(\partial_z[X_0 \hat X_0^\top]\right)\\
 &\Leftrightarrow \left(\int_0^T\expn^{\hat{\mathcal K}^\top t} \vect(\hat C^\top \hat C) dt\right)^\top \vect\left(\partial_z[\hat X_0 \hat X_0^\top]\right)= 2 \left(\int_0^T\expn^{\tilde{\mathcal K}^\top t} \vect(C^\top \hat C) dt\right)^\top  \vect\left(\partial_z[X_0 \hat X_0^\top]\right)\\
 &\Leftrightarrow \left(\vect(\hat Q(T))\right)^\top \vect\left(\partial_z[\hat X_0 \hat X_0^\top]\right)= 2 \left(\vect(\tilde Q(T))\right)^\top  \vect\left(\partial_z[X_0 \hat X_0^\top]\right)\\
 &\Leftrightarrow\left\langle \hat Q(T), \partial_z[\hat X_0 \hat X_0^\top] \right\rangle_F = 2 \left\langle  \tilde Q(T), \partial_z[X_0 \hat X_0^\top]\right\rangle_F \\
 &\Leftrightarrow\left\langle \hat Q(T), e_k e_\ell^\top \hat X_0^\top + \hat X_0 e_\ell e_k^\top \right\rangle_F = 2 \left\langle  \tilde Q(T), X_0 e_\ell e_k^\top \right\rangle_F
\end{align*}
inserting the vectorizations of \eqref{infinite_gram} and the definitions of $\hat Q(T)$, $\tilde Q(T)$. The definition of the Frobenius inner product and properties of the trace now lead to $e_k^\top Q(T) \hat X_0 e_\ell = e_k^\top\tilde Q(T)^\top X_0 e_\ell$ for all $k=1, \dots, \hat n$ and $\ell = 1, \dots, m$. This gives us condition (b).
    
We proceed with $z\in  \{\hat a_{k\ell}, \hat n^{(l)}_{k\ell}\}$ and use that in this case, we have $\partial_z[X_0 \hat X_0^\top], \partial_z[\hat X_0 \hat X_0^\top]=0$ in \eqref{rel3} and \eqref{rel4}. For that reason, 
the vectorization of the first equation in \eqref{rel4} has the (mild) solution representation $\vect(\partial_z[\hat F(t)])= \int_0^t \expn^{\hat{\mathcal K} (t - s)} \vect\left( (\partial_z \hat {\mathcal L})[\hat F(s)]\right) ds$. Therefore, we find the following representation for the left-hand side in \eqref{rel3} based on $\left(I-\expn^{\hat{\mathcal K}^\top T}\right)\vect(\hat Q) = \vect(\hat Q(T))$:
\begin{align*}
 \left\langle \hat Q, \partial_z[\hat F(T)] - (\partial_z \hat {\mathcal L})[\hat P(T)] \right\rangle_F &= (\vect(\hat Q))^\top \vect\left(\partial_z[\hat F(T)]-(\partial_z \hat {\mathcal L})[\hat P(T)\right) \\
 &= \vect(\hat Q)^\top \int_0^T \left(\expn^{\hat{\mathcal K} (T - t)} - I\right) \vect\left( (\partial_z \hat {\mathcal L})[\hat F(t)]\right) dt\\
 &= -\int_0^T \vect(\hat Q(T-t))^\top\vect\left( (\partial_z \hat {\mathcal L})[\hat F(t)]\right) dt \\
 &= -\int_0^T \left\langle \hat Q(T-t), (\partial_z \hat {\mathcal L})[\hat F(t)]\right\rangle_F dt
\end{align*}
exploiting that $(\partial_z \hat {\mathcal L})[\hat P(T)]=\int_0^T (\partial_z \hat {\mathcal L})[\hat F(t)] dt$. With the same steps, we also obtain for the right-hand side of \eqref{rel3} that\begin{align*}
 \left\langle \tilde Q, \partial_z[\tilde F(T)] - (\partial_z \tilde {\mathcal L})[\tilde P(T)] \right\rangle_F = -\int_0^T \left\langle \tilde Q(T-t), (\partial_z \tilde {\mathcal L})[\tilde F(t)]\right\rangle_F dt.
\end{align*}
For that reason, \eqref{rel3} is equivalent to \begin{align}\label{relan}
 \int_0^T \left\langle \hat Q(T-t), (\partial_z \hat {\mathcal L})[\hat F(t)]\right\rangle_F dt = 2 \int_0^T \left\langle \tilde Q(T-t), (\partial_z \tilde {\mathcal L})[\tilde F(t)]\right\rangle_F dt.
\end{align}
It remains to determine the partial derivatives of the Lyapunov operators. For $z=a_{k\ell}$, we find that 
$(\partial_z \hat {\mathcal L})[\hat X] = e_k e_\ell^\top \hat X^\top + \hat X e_\ell e_k^\top$ and $(\partial_z \tilde {\mathcal L})[\tilde X] = \tilde X e_\ell e_k^\top$. Applying this to \eqref{relan}, we have 
\begin{align*}
 \int_0^T \left\langle \hat Q(T-t), e_k e_\ell^\top\hat F(t) + \hat F(t) e_\ell e_k^\top\right\rangle_F dt = 2 \int_0^T \left\langle \tilde Q(T-t), \tilde F(t)e_\ell e_k^\top\right\rangle_F dt.
\end{align*}
By the definition of the Frobenius inner product and properties of the trace, we obtain \begin{align*}
 e_k^\top \int_0^T  \hat Q(T-t)\hat F(t) dt \;e_\ell =e_k^\top \int_0^T  \tilde Q(T-t)^\top \tilde F(t) dt\; e_\ell
\end{align*} 
for all $k, \ell = 1, \dots, \hat n$ providing (c). 
We now define $\hat\Psi_i^\top := \sum_{j=1}^{q} {\hat N}_{j}^\top k_{ij}$ and observe that 
$\partial_{\hat n^{(l)}_{k\ell}} \hat\Psi_i^\top = e_\ell e_k^\top k_{il}$. Consequently, we have 
\begin{align*}
 \partial_{\hat n^{(l)}_{k\ell}}\hat {\mathcal L} &= \partial_{\hat n^{(l)}_{k\ell}} \sum_{i, j=1}^{q} \hat N_i (\cdot) \hat N_j^\top k_{ij} =   \partial_{\hat n^{(l)}_{k\ell}} \sum_{i=1}^{q} \hat N_i (\cdot) \hat\Psi_i^\top = e_k e_\ell^\top (\cdot) \hat\Psi_l^\top + \sum_{i=1}^{q} \hat N_i (\cdot) e_\ell e_k^\top k_{il}  \\
 &=    e_k e_\ell^\top (\cdot) \hat\Psi_l^\top + \hat\Psi_l (\cdot)e_\ell e_k^\top, 
\end{align*}
since $k_{il}=k_{li}$. Analogue to the above steps, we obtain  
\begin{align*}
\partial_{\hat n^{(l)}_{k\ell}}\tilde {\mathcal L} = \partial_{\hat n^{(l)}_{k\ell}} \sum_{i, j=1}^{q} N_i (\cdot) \hat N_j^\top k_{ij}=  \partial_{\hat n^{(l)}_{k\ell}} \sum_{i=1}^{q} N_i (\cdot) \hat\Psi_i^\top = \sum_{i=1}^{q} N_i (\cdot)  e_\ell e_k^\top k_{il} = \Psi_l (\cdot)e_\ell e_k^\top, 
\end{align*}
where we set $\Psi_l := \sum_{j=1}^{q} {N}_{j} k_{lj}$. Consequently, for $z=\hat n^{(l)}_{k\ell}$, \eqref{relan} becomes \begin{align*}
 \int_0^T \left\langle \hat Q(T-t),   \hat e_k e_\ell^\top \hat F(t) \hat\Psi_l^\top + \hat\Psi_l \hat F(t)  e_\ell e_k^\top\right\rangle_F dt = 2 \int_0^T \left\langle \tilde Q(T-t), \Psi_l \tilde F(t) e_\ell e_k^\top\right\rangle_F dt
\end{align*}
for $l=1, \dots, q$. This is equivalent to \begin{align*}
 e_k^\top\int_0^T \hat Q(T-t) \hat\Psi_l \hat F(t) dt\;e_\ell =  e_k^\top\int_0^T \tilde Q(T-t)^\top \Psi_l \tilde F(t) dt\;e_\ell 
\end{align*}
for all $k, \ell = 1, \dots, \hat n$ concluding the proof.
\end{proof}

\bibliographystyle{plain}

\begin{thebibliography}{10}

\bibitem{andersonbroadie}
L.~Andersen and M.~Broadie.
\newblock {Primal-Dual Simulation Algorithm for Pricing Multidimensional
  American Options}.
\newblock {\em Manage. Sci.}, 50:1222--1234, 2004.

\bibitem{morbook1}
A.~C. Antoulas, C.~Beattie, and S.~Gugercin.
\newblock {\em {Interpolatory Methods for Model Reduction}}.
\newblock {SIAM}, 2020.

\bibitem{jump_vol}
O.~E. Barndorff-Nielsen and R.~Stelzer.
\newblock {The multivariate supOU stochastic volatility model}.
\newblock {\em Math. Finance}, 23(2):275--296, 2013.

\bibitem{bermudan_tensor}
C.~Bayer, M.~Eigel, L.~Sallandt, and P.~Trunschke.
\newblock {Pricing high-dimensional Bermudan options with hierarchical tensor
  formats}.
\newblock {\em arXiv preprint: 2103.01934}, 2021.

\bibitem{bayer2019implied}
C.~Bayer, J.~H{\"a}pp{\"o}l{\"a}, and R.~Tempone.
\newblock Implied stopping rules for {A}merican basket options from {M}arkovian
  projection.
\newblock {\em Quant. Finance}, 19(3):371--390, 2019.

\bibitem{machinelearning1}
C.~Beck, Weinan E, and A.~Jentzen.
\newblock {Machine Learning Approximation Algorithms for High-Dimensional Fully
  Nonlinear Partial Differential Equations and Second-order Backward Stochastic
  Differential Equations}.
\newblock {\em J. Nonlinear Sci.}, 29:1563--1619, 2019.

\bibitem{learn_tau1}
S.~Becker, P.~Cheridito, and A.~Jentzen.
\newblock {Deep optimal stopping}.
\newblock {\em J. Mach. Learn. Res.}, 20(1):2712--2736, 2019.

\bibitem{learn_tau2}
S.~Becker, P.~Cheridito, A.~Jentzen, and T.~Welti.
\newblock {Solving high-dimensional optimal stopping problems using deep
  learning}.
\newblock {\em SAM Research Report}, 32(3):470--514, 2019.

\bibitem{beckerhartredrich}
S.~Becker, C.~Hartmann, M.~Redmann, and L.~Richter.
\newblock {Error bounds for model reduction of feedback-controlled linear
  stochastic dynamics on Hilbert spaces}.
\newblock {\em Stoch. Process Their Appl.}, 149:107--141, 2022.

\bibitem{breiten_benner}
P.~Benner and T.~Breiten.
\newblock Interpolation-based $\mathcal{H}_2$-model reduction of bilinear
  control systems.
\newblock {\em SIAM J. Matrix Anal. Appl.}, 33(3):859--885, 2012.

\bibitem{bennerdamm}
P.~Benner and T.~Damm.
\newblock {Lyapunov equations, energy functionals, and model order reduction of
  bilinear and stochastic systems.}
\newblock {\em SIAM J. Control Optim.}, 49(2):686--711, 2011.

\bibitem{morbook2}
P.~Benner, M.~Ohlberger, A.~Cohen, and K.~Willcox.
\newblock {\em {Model Reduction and Approximation}}.
\newblock SIAM, 2017.

\bibitem{redmannbenner}
P.~{Benner} and M.~{Redmann}.
\newblock {Model Reduction for Stochastic Systems.}
\newblock {\em {Stoch PDE: Anal Comp}}, 3(3):291--338, 2015.

\bibitem{wiashart}
M.-F. Bru.
\newblock {Wishart Processes}.
\newblock {\em J. Theoret. Probab.}, 4(4):725--751, 1991.

\bibitem{brunick2013mimicking}
G.~Brunick and S.~Shreve.
\newblock Mimicking an {I}t{\^o} process by a solution of a stochastic
  differential equation.
\newblock {\em Ann. Appl. Probab.}, 23(4):1584--1628, 2013.

\bibitem{bungartzgriebel04}
H.~Bungartz and M.~Griebel.
\newblock {Sparse grids}.
\newblock {\em Acta Numer.}, 13:147--269, 2004.

\bibitem{CIR}
J.~C. Cox, J.~E. Ingersoll, and S.~A. Ross.
\newblock {A Theory of the Term Structure of Interest Rates}.
\newblock {\em Econometrica}, 53(2):385--407, 1985.

\bibitem{Cuchiero}
C.~Cuchiero.
\newblock {Affine and polynomial processes}.
\newblock {\em PhD Thesis, ETH Zurich}, 2011.

\bibitem{wishart_vol}
J.~Da~Fonseca, M.~Grasselli, and C.~Tebaldi.
\newblock {Option pricing when correlations are stochastic: an analytical
  framework}.
\newblock {\em Rev. Derivatives Res.}, 10(2):151--180, 2007.

\bibitem{damm_matrixeq}
T.~Damm.
\newblock {Direct methods and ADI-preconditioned Krylov subspace methods for
  generalized Lyapunov equations}.
\newblock {\em Numer. Linear Algebra Appl.}, 15(9):853--871, 2008.

\bibitem{machinelearning2}
Weinan E, J.~Han, and A.~Jentzen.
\newblock {Deep Learning-Based Numerical Methods for High-Dimensional Parabolic
  PDEs and Backward Stochastic Differential Equations}.
\newblock {\em Commun. Math. Stat.}, 5:349--380, 2017.

\bibitem{sparse_grid_habil}
T.~Gerstner.
\newblock {\em {Sparse grid quadrature methods for computational finance}}.
\newblock PhD thesis, Habilitation. University of Bonn, 2007.

\bibitem{linIRKA}
S.~Gugercin, A.~C. Antoulas, and C.~Beattie.
\newblock {$\mathcal H_2$ Model Reduction for Large-Scale Linear Dynamical
  System}.
\newblock {\em SIAM J. Matrix Anal. Appl.}, 30(2):609--638, 2008.

\bibitem{gyongy1986mimicking}
I.~Gy{\"o}ngy.
\newblock Mimicking the one-dimensional marginal distributions of processes
  having an {I}t{\^o} differential.
\newblock {\em Probab. Theory Relat. Fields}, 71(4):501--516, 1986.

\bibitem{hambly2016forward}
B.~Hambly, M.~Mariapragassam, and C.~Reisinger.
\newblock A forward equation for barrier options under the {B}runick {\&}
  {S}hreve {M}arkovian projection.
\newblock {\em Quant. Finance}, 16(6):827--838, 2016.

\bibitem{Bermudan_POD}
P.~Hepperger.
\newblock {Pricing high-dimensional Bermudan options using variance-reduced
  Monte Carlo methods}.
\newblock {\em {J. Comput. Finance}}, 16(3):99--126, 2013.

\bibitem{heston}
S.~L. Heston.
\newblock {A Closed-Form Solution for Options with Stochastic Volatility with
  Applications to Bond and Currency Options}.
\newblock {\em {Rev. Financ. Stud.}}, 6(2):327--343, 1993.

\bibitem{staboriginal}
R.~Z. Khasminskii.
\newblock {\em Stochastic stability of differential equations}, volume~66 of
  {\em Stochastic Modelling and Applied Probability}.
\newblock Springer, Heidelberg, second edition, 2012.

\bibitem{learn_continuation1}
M.~Kohler, A.~Krzyżak, and N.~Todorovic.
\newblock {Pricing of high-dimensional American options by neural networks}.
\newblock {\em {Mathematical Finance: An International Journal of Mathematics,
  Statistics and Financial Economics}}, 20(3):383--410, 2010.

\bibitem{pod}
K.~Kunisch and S.~Volkwein.
\newblock {Galerkin proper orthogonal decomposition methods for parabolic
  problems}.
\newblock {\em Numer. Math.}, 90(1):117--148, 2001.

\bibitem{learn_continuation2}
B.~Lapeyre and J.~Lelong.
\newblock {Neural network regression for Bermudan option pricing}.
\newblock {\em {Monte Carlo Methods Appl.}}, 27(3):227--247, 2021.

\bibitem{LS2001}
F.~A. Longstaff and E.~S. Schwartz.
\newblock {Valuing American options by simulation: a simple least-squares
  approach.}
\newblock {\em Rev. Financ. Stud.}, 14(1):113--147, 2001.

\bibitem{moore}
B.~C. {Moore}.
\newblock {Principal component analysis in linear systems: Controllability,
  observability, and model reduction.}
\newblock {\em {IEEE Trans. Autom. Contr.}}, 26:17--32, 1981.

\bibitem{Oksendal}
B.~Oksendal.
\newblock {\em Stochastic Differential Equations (6th Ed.): An Introduction
  with Applications}.
\newblock Springer-Verlag, Berlin, Heidelberg, 2010.

\bibitem{zabczyk}
S.~Peszat and J.~Zabczyk.
\newblock {\em {Stochastic Partial Differential Equations with L\'evy Noise. An
  evolution equation approach.}}
\newblock {Encyclopedia of Mathematics and Its Applications 113. Cambridge
  University Press}, 2007.

\bibitem{redmannspa2}
M.~Redmann.
\newblock {Type II singular perturbation approximation for linear systems with
  Lévy noise}.
\newblock {\em SIAM J. Control Optim.}, 56(3):2120--2158., 2018.

\bibitem{mor_heston}
M.~Redmann, C.~Bayer, and P.~Goyal.
\newblock {Low-dimensional approximations of high-dimensional asset price
  models}.
\newblock {\em SIAM J. Financial Math.}, 12(1):1--28, 2021.

\bibitem{mliopt}
M.~Redmann and M.~A. Freitag.
\newblock {Optimization based model order reduction for stochastic systems}.
\newblock {\em Appl. Math. Comput.}, 398, 2021.

\bibitem{sparse_fin}
C.~Reisinger and G.~Wittum.
\newblock {Efficient Hierarchical Approximation of High-Dimensional Option
  Pricing Problems}.
\newblock {\em SIAM J. Sci. Comput.}, 29(1):440--458, 2007.

\bibitem{TV2001}
J.~Tsitsiklis and B.~Van~Roy.
\newblock {Regression methods for pricing complex American style options.}
\newblock {\em IEEE Trans. Neural. Net.}, 12(14):694--703, 2001.

\bibitem{pod_sde}
T.~M. Tyranowski.
\newblock {Data-driven structure-preserving model reduction for stochastic
  Hamiltonian systems}.
\newblock {\em arXiv preprint:2201.13391}, 2022.

\bibitem{morZhaL02}
L.~Zhang and J.~Lam.
\newblock On {$H_2$} model reduction of bilinear systems.
\newblock {\em Automatica}, 38(2):205--216, 2002.

\end{thebibliography}

\end{document}